\theoremstyle{plain}
\newtheorem{thm}{Theorem}[section]
\newtheorem{lem}[thm]{Lemma}
\newtheorem{prop}[thm]{Proposition}
\newtheorem{defn}[thm]{Definition}
\newtheorem{rem}[thm]{Remark}
\newcommand{\simrightarrow}{\mathrel{\stackrel{\sim}{\longrightarrow}}}
\newcommand{\bijarrow}{\mathrel{\stackrel{1:1}{\longleftrightarrow}}}
\newcommand{\Int}{\mathop{\mathrm{Int}}\nolimits}
\newcommand{\Map}{\mathop{\mathrm{Map}}\nolimits}
\newcommand{\Hom}{\mathop{\mathrm{Hom}}\nolimits}
\newcommand{\R}{\mathbb{R}}
\newcommand{\C}{\mathbb{C}}
\begin{document}

 \title{Abundance of nilpotent orbits in real semisimple Lie algebras}
\author{Takayuki Okuda}
\subjclass[2010]{Primary 17B08, Secondary 57S30}
\keywords{nilpotent orbit; weighted Dynkin diagram; Satake diagram; Dynkin--Kostant classification}
\address{Department of Mathematics, Graduate School of Science, Hiroshima University 1-3-1 Kagamiyama, Higashi-Hiroshima, 739-8526 Japan}
\email{okudatak@hiroshima-u.ac.jp}
\date{}
\maketitle

\begin{abstract}
We formulate and prove that 
there are ``abundant'' in nilpotent orbits in real semisimple Lie algebras,
in the following sense. 
If $S$ denotes the collection of hyperbolic elements corresponding the weighted Dynkin diagrams coming from nilpotent orbits, 
then $S$ span the maximally expected space, namely, 
the $(-1)$-eigenspace of the longest Weyl group element.
The result is used to 
the study of fundamental groups of non-Riemannian locally symmetric spaces.
\end{abstract}

\setcounter{tocdepth}{1}
\tableofcontents

\section{Main theorem}\label{section:purpose}

Let $\mathfrak{g}$ be a real semisimple Lie algebra,
$\mathfrak{a}$ a maximally split abelian subspace of $\mathfrak{g}$,
and $W$ the Weyl group of $\Sigma(\mathfrak{g},\mathfrak{a})$.
We fix a positive system $\Sigma^+(\mathfrak{g},\mathfrak{a})$, 
denote by $\mathfrak{a}_+ (\subset \mathfrak{a})$ 
the closed Weyl chamber, 
$w_0$ the longest element of $W$,
and set $\mathfrak{a}^{-w_0} := \{ A \in \mathfrak{a} \mid -w_0 \cdot A = A \}$.

We denote by $\Hom(\mathfrak{sl}(2,\R),\mathfrak{g})$ the set of all Lie algebra homomorphisms from $\mathfrak{sl}(2,\R)$ to $\mathfrak{g}$, and put
\begin{align*}
\mathcal{H}^{n}(\mathfrak{g}) 
	&:= \left\{ \rho \begin{pmatrix} 1 & 0 \\ 0 & -1 \end{pmatrix} \in \mathfrak{g}~\middle|~ \rho \in \Hom(\mathfrak{sl}(2,\R), \mathfrak{g}) \right\}, \\ 
\mathcal{H}^n(\mathfrak{a}_+) &:= \mathfrak{a}_+ \cap \mathcal{H}^n(\mathfrak{g}).
\end{align*}

In this paper, we prove the following theorem:
\begin{thm}\label{thm:intro}
$\mathfrak{a}^{-w_0} = \R\text{-}\mathrm{span} (\mathcal{H}^{n}(\mathfrak{a}_+))$.
\end{thm}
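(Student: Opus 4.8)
The plan is to prove the two inclusions of Theorem~\ref{thm:intro} separately; the inclusion $\R\text{-}\mathrm{span}(\mathcal{H}^n(\mathfrak{a}_+))\subseteq\mathfrak{a}^{-w_0}$ is elementary, while the reverse inclusion is the substantive ``abundance'' statement. For the easy inclusion it is enough, since $\mathfrak{a}^{-w_0}$ is a linear subspace, to show $\mathcal{H}^n(\mathfrak{a}_+)\subseteq\mathfrak{a}^{-w_0}$. So I would fix $\rho\in\Hom(\mathfrak{sl}(2,\R),\mathfrak{g})$ with $H:=\rho\begin{pmatrix}1&0\\0&-1\end{pmatrix}\in\mathfrak{a}_+$, and precompose $\rho$ with the automorphism $\Ad\begin{pmatrix}0&1\\-1&0\end{pmatrix}$ of $\mathfrak{sl}(2,\R)$, which sends $\begin{pmatrix}1&0\\0&-1\end{pmatrix}$ to its negative; this yields another element of $\Hom(\mathfrak{sl}(2,\R),\mathfrak{g})$ with value $-H$, so $-H\in\Int(\mathfrak{g})\cdot H$. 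Since $w_0$ maps $\mathfrak{a}_+$ onto $-\mathfrak{a}_+$, the element $-w_0\cdot H$ lies in $\mathfrak{a}_+$ and is $W$-conjugate, hence $\Int(\mathfrak{g})$-conjugate, to $-H$. Thus $H$ and $-w_0\cdot H$ are two points of $\mathfrak{a}_+$ in one $\Int(\mathfrak{g})$-orbit; as $\mathfrak{a}_+$ is a fundamental domain for $\Int(\mathfrak{g})$ acting on the $\R$-hyperbolic elements of $\mathfrak{g}$, this forces $H=-w_0\cdot H$. Finally, both sides of the asserted equality decompose additively over the simple ideals of $\mathfrak{g}$ (a homomorphism $\mathfrak{sl}(2,\R)\to\bigoplus_i\mathfrak{g}_i$ is the tuple of its projections, and $\mathfrak{a}$, $\mathfrak{a}_+$, $w_0$, $\mathcal{H}^n$ split accordingly, each factor of $\mathcal{H}^n$ containing $0$), and both vanish when $\mathfrak{g}$ is compact, so I may assume $\mathfrak{g}$ simple and noncompact.

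For the abundance inclusion, fix simple restricted roots $\Pi=\{\alpha_1,\dots,\alpha_r\}$. The involution $-w_0$ permutes $\Pi$ and acts on the basis $\{\alpha_i^\vee\}$ of $\mathfrak{a}$ by that permutation, so $\dim\mathfrak{a}^{-w_0}$ equals the number $k$ of $\langle-w_0\rangle$-orbits in $\Pi$, and it suffices to produce $k$ linearly independent elements of $\mathcal{H}^n(\mathfrak{a}_+)$. The raw material I would use is, for subsets $\Theta\subseteq\Pi$, the $\Theta$-Levi subalgebra $\mathfrak{l}_\Theta$ (a $\theta$-stable reductive subalgebra with restricted root system $\Sigma\cap\mathbb{Z}\Theta$) together with the nilpotent $\mathfrak{sl}(2,\R)$-triples it contains: by a Cayley-type normalization of restricted root vectors (the construction $F=c\,\theta E$, $[E,F]\in\mathfrak{a}$, which already for a single root $\alpha$ produces a triple with neutral vector $\alpha^\vee$) one gets nilpotent elements of $\mathfrak{g}$ whose neutral vectors lie in $\mathfrak{a}$ and are positive on the roots of $\Theta$, and conjugating these into $\mathfrak{a}_+$ gives elements of $\mathcal{H}^n(\mathfrak{a}_+)$. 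When $\mathfrak{g}$ is quasi-split the choice $\Theta=\Pi$ gives the regular nilpotent orbit with characteristic $2\rho^\vee_\Sigma$ (all labels $2$), and in general the goal is to choose an increasing chain $\Theta_1\subsetneq\cdots\subsetneq\Theta_k=\Pi$ of $\langle-w_0\rangle$-stable subsets so that the characteristics of the associated orbits form a triangular — hence invertible — system relative to the orbits $c_1,\dots,c_k$ of $\Pi$. For $\mathfrak{g}$ complex simple viewed as a real Lie algebra the restricted system is the ordinary root system, and the statement reduces to the analogous assertion for complex simple Lie algebras, which is accessible by the same chain of Levi-principal orbits together with the known weighted Dynkin diagrams.

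The main obstacle is that a general real form is not quasi-split, so $\mathfrak{g}$ and the subalgebras $\mathfrak{l}_\Theta$ need not carry a nilpotent element with all restricted labels equal to $2$; which nilpotent orbits actually occur, and hence which characteristics are available, is governed by the Satake diagram of $\mathfrak{g}$ and the Dynkin--Kostant classification of real nilpotent orbits. I therefore expect the proof to extract, via the Satake diagram, for each $\langle-w_0\rangle$-orbit $c$ of $\Pi$ a concrete real nilpotent orbit whose characteristic ``detects'' $c$, and then to verify linear independence of the resulting $k$ characteristics. Establishing the existence of these detecting orbits uniformly, and arranging the detection so that the independence (triangularity) is transparent, is the technical heart of the argument, and is the step I expect to require a case-by-case inspection of the possible Satake diagrams.
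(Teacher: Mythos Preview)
Your proposal is correct and converges to the same endpoint as the paper. The easy inclusion is essentially the paper's Lemma~3.1 (the paper phrases the $\Int\mathfrak{g}$-conjugacy of $H$ and $-H$ via the induced group homomorphism $\tilde\rho:PSL(2,\R)\to\Int\mathfrak{g}$ rather than via precomposition, but this is the same mechanism), and the reduction to simple noncompact factors is the same.

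For the hard inclusion the paper does exactly the case-by-case verification over Satake diagrams that you anticipate, but with one methodological difference worth noting. Rather than building real nilpotent orbits intrinsically from restricted-root Levi subalgebras $\mathfrak{l}_\Theta$ as you propose, the paper first passes to the complex picture via the identity $\mathcal{H}^n(\mathfrak{a}_+)=\mathfrak{a}_+\cap\mathcal{H}^n(\mathfrak{j}_+)$ and the linear isomorphism $\Psi:\mathfrak{j}\to\Map(\Pi,\R)$, where $\Pi$ is the set of \emph{complex} simple roots. In these coordinates $\mathfrak{b}=\mathfrak{a}^{-w_0}$ becomes the space of weighted Dynkin diagrams that match the Satake diagram $S_\mathfrak{g}$ and are fixed by the opposition involution $\iota$, while $\mathcal{H}^n(\mathfrak{a}_+)$ becomes the set of Dynkin--Kostant diagrams of complex nilpotent orbits in $\mathfrak{g}_\C$ that match $S_\mathfrak{g}$. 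The case-by-case step then consists of reading off, for each real form, enough Satake-matching diagrams from the standard Dynkin--Kostant tables to span $\Psi(\mathfrak{b})$; the chosen families (partition shapes $[2s+1,1^{\cdots}]$, $[(2s+1)^2,1^{\cdots}]$, etc., in the classical types) are visibly triangular in exactly the sense you describe. This buys the paper a reduction to the well-tabulated \emph{complex} classification and makes the verification a table lookup, whereas your Levi formulation would require producing and normalizing real $\mathfrak{sl}_2$-triples form by form; conversely, your framing in terms of $\langle -w_0\rangle$-orbits on the restricted simple roots gives a cleaner a priori count of $\dim\mathfrak{a}^{-w_0}$ than the paper makes explicit.
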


In Theorem \ref{thm:intro}, the inclusion
\begin{align}
\mathfrak{a}^{-w_0} \subset \R\text{-}\mathrm{span} (\mathcal{H}^{n}(\mathfrak{a}_+)) \label{eq:nontri1}
\end{align}
is non-trivial and the opposite inclusion is easy (see Section \ref{section:proof} for more details). 

Theorem \ref{thm:intro} means that 
nilpotent orbits are ``abundant'' in $\mathfrak{g}$ in the following sense (see \eqref{eq:nontriv2}).
Let us denote by $\mathcal{N}(\mathfrak{g})$ the set of all nilpotent elements in $\mathfrak{g}$.
Then by results of Jacobson--Morozov and Kostant, we have 
\[
\mathcal{N}(\mathfrak{g}) 
= \left \{ \rho \begin{pmatrix} 0 & 1 \\ 0 & 0 \end{pmatrix} \in \mathfrak{g} ~\middle|~ \rho \in \Hom(\mathfrak{sl}(2,\R),\mathfrak{g}) \right \} 
\]
and for each nilpotent (adjoint) orbit $\mathcal{O} \in \mathcal{N}(\mathfrak{g})/{\Int \mathfrak{g}}$, 
there exists a unique $A_\mathcal{O} \in \mathcal{H}^n(\mathfrak{a}_+)$ such that $\rho \begin{pmatrix}
1 & 0 \\ 0 & -1
\end{pmatrix} = A_\mathcal{O}$ and $\rho \begin{pmatrix}
0 & 1 \\ 0 & 0
\end{pmatrix} \in \mathcal{O}$ for some $\rho \in \Hom(\mathfrak{sl}(2,\R),\mathfrak{g})$ (see \cite[Section 9.2]{Collingwood-McGovern93} for more details).
Then the correspondence $\mathcal{O} \mapsto A_\mathcal{O}$ gives a surjective map:
\[
\phi: 
\mathcal{N}(\mathfrak{g})/{\Int \mathfrak{g}} \twoheadrightarrow \mathcal{H}^n(\mathfrak{a}_+).
\]
The non-trivial part \eqref{eq:nontri1} of Theorem \ref{thm:intro} is equivalent to the following inclusion:
\begin{align}
\mathfrak{a}^{-w_0} \subset \R\text{-}\mathrm{span}~\phi(\mathcal{N}(\mathfrak{g})/{\Int \mathfrak{g}}). \label{eq:nontriv2}
\end{align}

Theorem \ref{thm:intro} was announced in \cite[Proposition 4.8 (i)]{Okuda13cls}
 which played a key role in proving 
one of the main results in \cite{Okuda13cls} was that 
semisimple symmetric spaces $G/H$ admit properly discontinuous groups 
which are not virtually-abelian 
if and only if 
$G/H$ admit proper actions of $SL(2,\R)$ (see also Appendix \ref{sec:app}).
We illustrated an idea of the proof 
in \cite[Section 7.5]{Okuda13cls}
by an example $\mathfrak{g} = \mathfrak{su}(4,2)$,
but postponed a full proof to this paper.

\section{Algorithm to classify hyperbolic elements come from nilpotent orbits}

In this section, we recall the algorithm to classify elements in $\mathfrak{a}^{-w_0}$ and $\mathcal{H}^n(\mathfrak{a}_+)$ described in \cite{Okuda13cls}.

\subsection{Notation}

We set up our notation.
Let $\mathfrak{g}_\mathbb{C}$ be a complex semisimple Lie algebra 
and $\mathfrak{g}$ a real form of $\mathfrak{g}_\mathbb{C}$.
We fix a Cartan decomposition $\mathfrak{g} = \mathfrak{k} + \mathfrak{p}$ with a Cartan involution $\theta$ on $\mathfrak{g}$.

Take a $\theta$-stable split Cartan subalgebra $\mathfrak{j}_0$ of $\mathfrak{g}$.
That is, $\mathfrak{j}_0$ is a maximal abelian subspace of $\mathfrak{g}$ stable by $\theta$ such that $\mathfrak{a} := \mathfrak{j}_0 \cap \mathfrak{p}$ is a maximal abelian subspace of $\mathfrak{p}$.
Then $\mathfrak{j}_0$ can be written as $\mathfrak{j}_0 = \mathfrak{t} + \mathfrak{a}$ where $\mathfrak{t}$ is a maximal abelian subspace of the centralizer $Z_\mathfrak{k}(\mathfrak{a})$ of $\mathfrak{a}$ in $\mathfrak{k}$.
Let us denote by $\mathfrak{j}_\mathbb{C} := \mathfrak{j}_0 + \sqrt{-1}\mathfrak{j}_0$ 
and $\mathfrak{j} := \sqrt{-1} \mathfrak{t} + \mathfrak{a}$.
Then $\mathfrak{j}_\mathbb{C}$ is a Cartan subalgebra 
of $\mathfrak{g}_\mathbb{C}$ and $\mathfrak{j}$ is a real form of it with 
\[
\mathfrak{j} = \{ A \in \mathfrak{j}_\mathbb{C} \mid \alpha(A) \in \mathbb{R} \ 
\text{for any } \alpha \in \Delta \},
\]
where $\Delta$ is the root system of $(\mathfrak{g}_\mathbb{C},\mathfrak{j}_\mathbb{C})$. 
We put \[
\Sigma := \{ \alpha|_\mathfrak{a} \mid \alpha \in \Delta \} \setminus \{0\} \subset \mathfrak{a}^*
\] 
to the restricted root system of $(\mathfrak{g},\mathfrak{a})$.
Then we can take a positive system $\Delta^+$ of $\Delta$ such that the subset 
\[
\Sigma^+ := 
\{ \alpha|_\mathfrak{a} \mid \alpha \in \Delta^+ \} \setminus \{0\}.
\]
of $\Sigma$ becomes a positive system.
In fact, if we take an ordering on $\mathfrak{a}$ and extend it to $\mathfrak{j}$, 
then the corresponding positive system $\Delta^+$ satisfies the condition above.
Let us denote by $W^\mathbb{C}$, $W$ the Weyl groups of $\Delta$, $\Sigma$, respectively.
We set the closed positive Weyl chambers
\begin{align*}
\mathfrak{j}_+ &:= \{\, A \in \mathfrak{j} \mid \alpha(A) \geq 0 \quad 
\text{for any } \alpha \in \Delta^+ \,\}, \\
\mathfrak{a}_+ &:= \{\, A \in \mathfrak{a} \mid \xi(A) \geq 0 \quad 
\text{for any } \xi \in \Sigma^+ \,\}.
\end{align*}
Then $\mathfrak{j}_+$ and $\mathfrak{a}_+$ are fundamental domains 
of $\mathfrak{j}$, $\mathfrak{a}$ for the actions of $W^\mathbb{C}$ and $W$, respectively.
By the definition of $\Delta^+$ and $\Sigma^+$, 
we have $\mathfrak{a}_+ = \mathfrak{j}_+ \cap \mathfrak{a}$.

Let $w_0$ denote the longest element of $W$ with respect to
the positive system $\Sigma^+$.
Then the linear transform $x \mapsto -w_0 \cdot x$ on $\mathfrak{a}$ 
leaves the closed Weyl chamber $\mathfrak{a}_+$ invariant.
As in Y.~Benoist \cite{Benoist96}, we define a subspace $\mathfrak{b}$ in $\mathfrak{a}$ by
\begin{align*}
\mathfrak{b} := \mathfrak{a}^{-w_0} = \{\, A \in \mathfrak{a} \mid -w_0 \cdot A = A \,\}.
\end{align*}

\subsection{Satake diagram}
\label{subsection:Satake_and_weighted}

We recall some facts for Satake diagrams of $\mathfrak{g}$ and weighted Dynkin diagrams corresponding to elements in $\mathfrak{a}_+$ in this subsection.

Let us denote by $\Pi$ the fundamental system of $\Delta^+$.
Then 
\[
\overline{\Pi} := \{\, \alpha|_\mathfrak{a} \mid \alpha \in \Pi \,\} \setminus \{0\}
\] is the fundamental system of $\Sigma^+$.
The Satake diagram $S_\mathfrak{g}$ of a semisimple Lie algebra $\mathfrak{g}$ 
consists of the following three data: 
the Dynkin diagram of $\mathfrak{g}_\mathbb{C}$ with nodes $\Pi$; 
black nodes $\Pi_0 := \{ \alpha \in \Pi \mid \alpha|_\mathfrak{a} = 0 \}$ in $S$; 
and arrows joining $\alpha \in \Pi \setminus \Pi_0$ and 
$\beta \in \Pi \setminus \Pi_0$ in $S$ 
whose restrictions to $\mathfrak{a}$ are the same 
(see \cite{Araki62,Satake60} for more details).

For any $A \in \mathfrak{j}$, we can define a map 
\[
\Psi_A : \Pi \rightarrow \mathbb{R},\ \alpha \mapsto \alpha(A).
\]
We call $\Psi_A$ the weighted Dynkin diagram corresponding to $A \in \mathfrak{j}$,
and $\alpha(A)$ the weight on a node $\alpha \in \Pi$ of the weighted Dynkin diagram.
Since $\Pi$ is a basis of $\mathfrak{j}^*$, the correspondence 
\begin{align}
\Psi: \mathfrak{j} \rightarrow \Map(\Pi,\mathbb{R}),\ A \mapsto \Psi_A \label{eq:psi}
\end{align}
is a linear isomorphism between real vector spaces. 
In particular, $\Psi$ is bijective, 
and hence
\[
\Psi|_{\mathfrak{j}_+} : \mathfrak{j}_+ \rightarrow \Map(\Pi,\mathbb{R}_{\geq 0}),\ A \mapsto \Psi_A
\]
is also bijective.
We say that a weighted Dynkin diagram is trivial if all weights are zero.
Namely, the trivial diagram corresponds 
to the zero in $\mathfrak{j}$ by $\Psi$.

Here, we recall the definition of weighted Dynkin diagrams \textit{matching} 
the Satake diagram $S_\mathfrak{g}$ of $\mathfrak{g}$ as follows:

\begin{defn}[{\cite[Definition 7.3]{Okuda13cls}}]\label{defn:match}
Let $\Psi \in \Map(\Pi,\mathbb{R})$ be a weighted Dynkin diagram 
of $\mathfrak{g}_\mathbb{C}$ and 
$S_\mathfrak{g}$ the Satake diagram of $\mathfrak{g}$ with nodes $\Pi$.
We say that $\Psi$ \textit{matches} $S_\mathfrak{g}$ 
if all the weights on black nodes in $\Pi_0$ are zero and 
any pair of nodes joined by an arrow have the same weights.
\end{defn}

Then the following lemma holds:

\begin{lem}[{\cite[Lemma 7.5]{Okuda13cls}}]\label{lem:a_to_Satake}
The linear isomorphism $\Psi:\mathfrak{j} \rightarrow \Map(\Pi,\mathbb{R})$ 
induces a linear isomorphism  
\[
\mathfrak{a} \rightarrow 
\{\, \Psi_A \in \Map(\Pi,\mathbb{R}) \mid \text{$\Psi_A$ matches $S_\mathfrak{g}$} \,\},\quad A \mapsto \Psi_A.
\]
In particular, by this linear isomorphism, 
we have 
\[
\mathfrak{a}_+ \bijarrow 
\{\, \Psi_A \in \Map(\Pi,\mathbb{R}_{\geq 0}) \mid \text{$\Psi_A$ matches $S_\mathfrak{g}$} \,\}.
\]
\end{lem}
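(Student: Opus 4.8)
The map $\Psi$ is already established to be a linear isomorphism $\mathfrak{j} \to \Map(\Pi,\R)$, so the statement reduces to identifying the image $\Psi(\mathfrak{a})$ with the subspace
\[
M := \{\, \Psi_A \in \Map(\Pi,\R) \mid \Psi_A \text{ matches } S_\mathfrak{g} \,\}.
\]
The plan is first to dispose of the easy inclusion $\Psi(\mathfrak{a}) \subseteq M$: if $A \in \mathfrak{a}$, then $\alpha(A) = (\alpha|_\mathfrak{a})(A)$ for every $\alpha \in \Pi$, so $\alpha(A) = 0$ for $\alpha \in \Pi_0$, and $\alpha(A) = \beta(A)$ whenever $\alpha,\beta \in \Pi\setminus\Pi_0$ are joined by an arrow, since by definition of $S_\mathfrak{g}$ such a pair satisfies $\alpha|_\mathfrak{a} = \beta|_\mathfrak{a}$. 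Thus all the content lies in the reverse inclusion $M \subseteq \Psi(\mathfrak{a})$.

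For the reverse inclusion I would use that $\overline{\Pi}$, being the fundamental system of the restricted root system $\Sigma^+$ and $\Sigma$ spanning $\mathfrak{a}^*$, is a basis of $\mathfrak{a}^*$. Fix $\Psi_A \in M$. Define $\overline{\Psi} \colon \overline{\Pi} \to \R$ by $\overline{\Psi}(\xi) := \alpha(A)$, where $\alpha$ is any node of $\Pi\setminus\Pi_0$ with $\alpha|_\mathfrak{a} = \xi$; such an $\alpha$ exists because $\overline{\Pi} = \{\alpha|_\mathfrak{a} \mid \alpha \in \Pi\setminus\Pi_0\}$ by definition, and $\overline{\Psi}$ is well defined because two such nodes are either equal or joined by an arrow in $S_\mathfrak{g}$, hence carry the same weight of $\Psi_A$ as $\Psi_A$ matches $S_\mathfrak{g}$. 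Since $\overline{\Pi}$ is a basis of $\mathfrak{a}^*$, there is a unique $A' \in \mathfrak{a}$ with $\xi(A') = \overline{\Psi}(\xi)$ for all $\xi \in \overline{\Pi}$.

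It then suffices to check $\Psi_{A'} = \Psi_A$, after which injectivity of $\Psi$ gives $A = A' \in \mathfrak{a}$ and the reverse inclusion follows. For $\alpha \in \Pi_0$ one has $\alpha(A') = (\alpha|_\mathfrak{a})(A') = 0 = \alpha(A)$ (the last equality by matching); for $\alpha \in \Pi\setminus\Pi_0$ one has $\alpha(A') = (\alpha|_\mathfrak{a})(A') = \overline{\Psi}(\alpha|_\mathfrak{a}) = \alpha(A)$ by construction. This proves $\Psi(\mathfrak{a}) = M$. The ``in particular'' clause is then obtained by intersecting with the positivity cone: the text records $\mathfrak{a}_+ = \mathfrak{a} \cap \mathfrak{j}_+$ and that $\Psi$ restricts to a bijection $\mathfrak{j}_+ \to \Map(\Pi,\R_{\geq 0})$, so $\Psi(\mathfrak{a}_+) = \Psi(\mathfrak{a}) \cap \Map(\Pi,\R_{\geq 0}) = \{\, \Psi_A \in \Map(\Pi,\R_{\geq 0}) \mid \Psi_A \text{ matches } S_\mathfrak{g} \,\}$.

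Beyond linear algebra the only input is that $\overline{\Pi}$ is a basis of $\mathfrak{a}^*$; everything else is forced by the definitions of the Satake diagram and of matching recalled above. Consequently I expect the argument to be short, and the one point that needs care is the well-definedness of $\overline{\Psi}$, i.e.\ the assertion that the only coincidences among restrictions of non-black simple roots are those recorded by arrows in $S_\mathfrak{g}$ --- which is precisely how $S_\mathfrak{g}$ was defined, so no deeper structure theory is invoked. (Alternatively, one could avoid constructing $A'$ by hand and instead compare dimensions: $\dim M = |\Pi| - |\Pi_0| - \#\{\text{arrow pairs}\} = |\overline{\Pi}| = \dim\mathfrak{a}$, so the easy inclusion together with injectivity of $\Psi$ already forces equality.)
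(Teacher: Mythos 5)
Your argument is correct and is essentially the intended one: the paper itself gives no proof here, deferring to \cite[Lemma 7.5]{Okuda13cls}, and your reduction to the two inclusions --- the trivial one from $\alpha(A)=(\alpha|_{\mathfrak a})(A)$, and the converse via the fact that $\overline{\Pi}$ is a basis of $\mathfrak a^*$ together with the observation that the matching conditions record exactly the coincidences of restrictions of white simple roots --- is the standard way to prove it. The only point worth noting is that your parenthetical dimension count implicitly uses that each white node lies in at most one arrow (so the conditions cutting out $M$ are independent), but your main construction of $A'$ does not need this, so the proof stands as written.
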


\subsection{Weighted Dynkin diagrams corresponding to elements in $\mathfrak{b}$}\label{subsection:WDD_hyp}

In this subsection, we recall an algorithm to classify all weighted Dynkin diagrams corresponding to elements in $\mathfrak{b} = \mathfrak{a}^{-w_0}$ (see \eqref{eq:b}) by using the Satake diagram of $\mathfrak{g}$ and the opposition involution on the Dynkin diagram of $\mathfrak{g}_\C$ studied by Tits \cite{Tits66}.

Let us denote by $w_0^\mathbb{C}$ the longest element of $W^\mathbb{C}$ corresponding to the positive system $\Delta^+$.
Then, by the action of $w_0^\mathbb{C}$, 
every element in $\mathfrak{j}_+$ moves to 
$-\mathfrak{j}_+ := \{ -A \mid A \in \mathfrak{j}_+ \}$.
In particular,
\[
-w_0^\mathbb{C} : \mathfrak{j} \rightarrow \mathfrak{j},\quad A \mapsto -(w_0^\mathbb{C} \cdot A)
\] 
is an involutive automorphism on $\mathfrak{j}$ preserving $\mathfrak{j}_+$.
We put 
\[
\mathfrak{j}^{-w_0^\mathbb{C}} := \{\, A \in \mathfrak{j} \mid -w_0^\mathbb{C} \cdot A = A \,\}.
\]

Recall that the map $\Psi : \mathfrak{j} \rightarrow \Map(\Pi,\mathbb{R})$ 
in Section \ref{subsection:Satake_and_weighted}
is a linear isomorphism.
Thus $-w_0^\mathbb{C}$ induces an involutive endomorphism on $\Map(\Pi,\mathbb{R})$, 
which will be denoted by $\iota$.
In other words, 
the involution $\iota$ on $\Map(\Pi,\R)$ is induced by the opposition involution $\Pi \rightarrow \Pi, \alpha \mapsto -(w_0^\C)^* \cdot \alpha$ (see Tits \cite[Section 1.5.1]{Tits66} for more details).

Then we have 
\[
\Psi(\mathfrak{j}^{-w_0^\mathbb{C}}) = \Map(\Pi,\mathbb{R})^\iota,
\]
where $\Map(\Pi,\mathbb{R})^\iota$ denote 
the set of all weighted Dynkin diagrams held invariant by $\iota$.
For each complex simple Lie algebra $\mathfrak{g}_\mathbb{C}$,
we determine $\iota$ as follows:

\begin{prop}[{\cite[Theorem 6.3]{Okuda13cls}}]\label{thm:Bhyp_to_iota-inv}
Suppose that $\mathfrak{g}_\mathbb{C}$ is simple.
The involution $\iota$ is not identity 
if and only if $\mathfrak{g}_\mathbb{C}$ is 
of type $A_n$, $D_{2k+1}$ or $E_6$ $($$n \geq 2,\ k \geq 2$$)$.
In other words, this is the complete list of simple $\mathfrak{g}_\mathbb{C}$
with $\mathfrak{j}^{-w_0^\mathbb{C}} \neq \mathfrak{j}$. 
In such cases, the forms of $\iota$ are the following$:$
\begin{description}
\item[For type $A_n$ $($$n \geq 2$, $\mathfrak{g}_\mathbb{C} \simeq \mathfrak{sl}(n+1,\mathbb{C})$$)$]
\[
\iota :
\begin{xy}
	*++!D{a_1} *\cir<2pt>{}        ="A",
	(10,0) *++!D{a_2} *\cir<2pt>{} ="B",
	(30,0) *++!D{a_{n-1}} *\cir<2pt>{} ="D",
	(40,0) *++!D{a_n} *\cir<2pt>{} ="E",
	\ar@{-} "A";"B"
	\ar@{-} "B";(15,0)
	\ar@{.} (15,0);(25,0)
	\ar@{-} (25,0);"D"
	\ar@{-} "D";"E"
\end{xy}
\mapsto 
\begin{xy}
	*++!D{a_n} *\cir<2pt>{}        ="A",
	(10,0) *++!D{a_{n-1}} *\cir<2pt>{} ="B",
	(30,0) *++!D{a_2} *\cir<2pt>{} ="D",
	(40,0) *++!D{a_1} *\cir<2pt>{} ="E",
	\ar@{-} "A";"B"
	\ar@{-} "B";(15,0)
	\ar@{.} (15,0);(25,0)
	\ar@{-} (25,0);"D"
	\ar@{-} "D";"E"
\end{xy}
\]
\item [For type $D_{2k+1}$ $($$k \geq 2$, $\mathfrak{g}_\mathbb{C} \simeq \mathfrak{so}(4k+2,\mathbb{C})$$)$] 
\[
\iota : 
\begin{xy}
	*++!D{a_1} *\cir<2pt>{}        ="A",
	(10,0) *++!D{a_2} *\cir<2pt>{} ="B",
	(25,0) *++!D{a_{2k-1}} *\cir<2pt>{} ="C",
	(35,-5) *++!D{a_{2k}} *++!U{} *\cir<2pt>{} ="D",
	(35,5) *++!D{a_{2k+1}} *\cir<2pt>{} ="E",	
	\ar@{-} "A";"B"
	\ar@{-} "B"; (15,0)
	\ar@{.} (15,0) ; (20,0)^*!U{\cdots}
	\ar@{-} (20,0) ; "C"
	\ar@{-} "C";"D"
	\ar@{-} "C";"E"
\end{xy}
\mapsto
\begin{xy}
	*++!D{a_1} *\cir<2pt>{}        ="A",
	(10,0) *++!D{a_2} *\cir<2pt>{} ="B",
	(25,0) *++!D{a_{2k-1}} *\cir<2pt>{} ="C",
	(35,-5) *++!D{a_{2k+1}} *++!U{} *\cir<2pt>{} ="D",
	(35,5) *++!D{a_{2k}} *\cir<2pt>{} ="E",	
	\ar@{-} "A";"B"
	\ar@{-} "B"; (15,0)
	\ar@{.} (15,0) ; (20,0)^*!U{\cdots}
	\ar@{-} (20,0) ; "C"
	\ar@{-} "C";"D"
	\ar@{-} "C";"E"
\end{xy}
\]
\item[For type $E_6$ $($$\mathfrak{g}_\mathbb{C} \simeq \mathfrak{e}_{6,\mathbb{C}}$$)$]
\[
\iota : 
\begin{xy}
	*++!D{a_1} *\cir<2pt>{}        ="A",
	(10,0) *++!D{a_2} *\cir<2pt>{} ="B",
	(20,0) *++!D{a_3} *\cir<2pt>{} ="C",
	(30,0) *++!D{a_4} *\cir<2pt>{} ="D",
	(40,0) *++!D{a_5} *\cir<2pt>{} ="E",
	(20,-10) *++!R{a_6} *\cir<2pt>{} ="F",
	\ar@{-} "A";"B"
	\ar@{-} "B";"C"
	\ar@{-} "C";"D"
	\ar@{-} "D";"E"
	\ar@{-} "C";"F"
\end{xy}
\mapsto
\begin{xy}
	*++!D{a_5} *\cir<2pt>{}        ="A",
	(10,0) *++!D{a_4} *\cir<2pt>{} ="B",
	(20,0) *++!D{a_3} *\cir<2pt>{} ="C",
	(30,0) *++!D{a_2} *\cir<2pt>{} ="D",
	(40,0) *++!D{a_1} *\cir<2pt>{} ="E",
	(20,-10) *++!R{a_6} *\cir<2pt>{} ="F",
	\ar@{-} "A";"B"
	\ar@{-} "B";"C"
	\ar@{-} "C";"D"
	\ar@{-} "D";"E"
	\ar@{-} "C";"F"
\end{xy}
\]
\end{description}
\end{prop}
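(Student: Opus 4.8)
The plan is to reduce the statement to the classical description of the longest Weyl group element and then argue type by type. As recalled above, $\iota$ is the involution of $\Map(\Pi,\mathbb{R})$ induced by the opposition permutation $\Pi \to \Pi$, $\alpha \mapsto -(w_0^\mathbb{C})^* \cdot \alpha$. Hence $\iota = \mathrm{id}$ if and only if this permutation is trivial, i.e. if and only if $-(w_0^\mathbb{C})^*$ fixes every $\alpha \in \Pi$, i.e. if and only if $w_0^\mathbb{C} = -\mathrm{id}$ on $\mathfrak{j}_\mathbb{C}$. Since $w_0^\mathbb{C}$ is the unique element of $W^\mathbb{C}$ carrying $\Delta^+$ to $-\Delta^+$, this last condition is in turn equivalent to $-\mathrm{id} \in W^\mathbb{C}$. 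As $\Pi$, $W^\mathbb{C}$, and therefore $\iota$ all decompose as direct products over the simple ideals of $\mathfrak{g}_\mathbb{C}$, it suffices to treat each simple type, and for each of them either to exhibit an element of $W^\mathbb{C}$ acting by $-\mathrm{id}$ or to compute $-w_0^\mathbb{C}$ on the fundamental roots.

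For types $A_1$, $B_n$, $C_n$, $F_4$, $G_2$, $E_7$, $E_8$ one has $-\mathrm{id} \in W^\mathbb{C}$, so $\iota = \mathrm{id}$: for $B_n$ and $C_n$ the Weyl group is the full group of signed permutations of an orthonormal basis and visibly contains $-\mathrm{id}$, while for $A_1$, $F_4$, $G_2$, $E_7$, $E_8$ this is classical and can be read off from the tables in Bourbaki. For $D_n$ with $n \ge 4$ the Weyl group is the subgroup of signed permutations effecting an even number of sign changes, so $-\mathrm{id} \in W^\mathbb{C}$ exactly when $n$ is even; for $n = 2k+1$ odd, writing the simple roots as $\alpha_i = e_i - e_{i+1}$ $(1 \le i \le n-1)$ and $\alpha_n = e_{n-1}+e_n$, a direct computation gives $w_0^\mathbb{C}\alpha_i = -\alpha_i$ for $i \le n-2$, $w_0^\mathbb{C}\alpha_{n-1} = -\alpha_n$, and $w_0^\mathbb{C}\alpha_n = -\alpha_{n-1}$, so $-w_0^\mathbb{C}$ fixes $\alpha_1,\dots,\alpha_{n-2}$ and interchanges the two fork roots $\alpha_{n-1}$, $\alpha_n$, as drawn; the case $n=3$ is $D_3 \cong A_3$ and is subsumed in the $A_n$ case below, so the genuinely new family is $k \ge 2$. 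For $A_n$ with $n \ge 2$, realize $\mathfrak{g}_\mathbb{C} = \mathfrak{sl}(n+1,\mathbb{C})$ with $\alpha_i = e_i - e_{i+1}$; then $w_0^\mathbb{C}$ is the order-reversing permutation $e_i \mapsto e_{n+2-i}$, whence $w_0^\mathbb{C}\alpha_i = -\alpha_{n+1-i}$ and $-w_0^\mathbb{C}$ is the nontrivial flip $a_i \mapsto a_{n+1-i}$ of the diagram. Finally, for $E_6$ it is classical that $-\mathrm{id} \notin W(E_6)$, so $w_0^\mathbb{C} \ne -\mathrm{id}$; since $-w_0^\mathbb{C}$ induces an automorphism of the $E_6$ Dynkin diagram and that diagram has exactly one nontrivial automorphism, $-w_0^\mathbb{C}$ must realize it, which is the one pictured. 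Collecting these cases yields the stated list together with the explicit forms of $\iota$.

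The only real work lies in the middle paragraph: computing $-w_0^\mathbb{C}$ on the simple roots for $A_n$ and $D_{2k+1}$ (immediate from the signed-permutation descriptions of the Weyl groups), and confirming, for the exceptional types, whether $-\mathrm{id}$ belongs to the Weyl group. None of this is deep---each point can be settled by a short direct computation or quoted from standard references---so I anticipate no genuine obstacle beyond organizing the case analysis and matching each diagram automorphism with the corresponding picture.
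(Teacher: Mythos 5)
Your argument is correct and is the standard one: reduce $\iota=\mathrm{id}$ to the condition $-\mathrm{id}\in W^{\mathbb{C}}$ via the uniqueness of the element sending $\Delta^+$ to $-\Delta^+$, then check each simple type using the signed-permutation models (and, for $E_6$, the fact that its Dynkin diagram has a unique nontrivial automorphism). The paper does not reprove this proposition but imports it from \cite[Theorem 6.3]{Okuda13cls} (ultimately Tits's description of the opposition involution), which rests on essentially the same computation, so there is nothing to object to.
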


It should be noted that 
for the case where $\mathfrak{g}_\mathbb{C}$ is of type $D_{2k}$ ($k \geq 2$), 
the involution $-w_0^\mathbb{C}$ on $\mathfrak{j}$ is trivial 
although the Dynkin diagram of type $D_{2k}$ admits some non-trivial involutive automorphisms.

To classify elements in $\mathfrak{b}$, we use the following lemma:
\begin{lem}[{\cite[Lemma 7.6]{Okuda13cls}}]\label{lem:bjw}
$
\mathfrak{b} = \mathfrak{j}^{-w_0^\mathbb{C}} \cap \mathfrak{a}.
$
\end{lem}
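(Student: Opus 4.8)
The plan is to derive Lemma~\ref{lem:bjw} from the sharper assertion that the linear involution $-w_0^\mathbb{C}$ of $\mathfrak{j}$ maps the subspace $\mathfrak{a}$ into itself and restricts there to $-w_0$, that is,
\[
-w_0^\mathbb{C} \cdot A = -w_0 \cdot A \qquad \text{for every } A \in \mathfrak{a}.
\]
Granting this, the lemma follows at once: for $A \in \mathfrak{a}$ one has $-w_0 \cdot A = A$ if and only if $-w_0^\mathbb{C} \cdot A = A$, so $\mathfrak{b} = \mathfrak{a}^{-w_0} = \{\, A \in \mathfrak{a} \mid -w_0^\mathbb{C} \cdot A = A \,\} = \mathfrak{j}^{-w_0^\mathbb{C}} \cap \mathfrak{a}$.

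To prove the displayed identity I would argue inside the closed chamber. Since $\mathfrak{g}$ is semisimple, $\Sigma$ spans $\mathfrak{a}^*$, so $\mathfrak{a}_+$ has nonempty interior in $\mathfrak{a}$ and hence $\R\text{-}\mathrm{span}(\mathfrak{a}_+) = \mathfrak{a}$. By linearity it is therefore enough to check $-w_0^\mathbb{C} \cdot A = -w_0 \cdot A$ (an equality of vectors in $\mathfrak{j}$) for each $A \in \mathfrak{a}_+$; the claimed stability $-w_0^\mathbb{C}(\mathfrak{a}) = \mathfrak{a}$ then follows because $-w_0(\mathfrak{a}) = \mathfrak{a}$. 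Fix $A \in \mathfrak{a}_+$. Since $x \mapsto -w_0 \cdot x$ preserves $\mathfrak{a}_+$, we get $-w_0 \cdot A \in \mathfrak{a}_+ = \mathfrak{j}_+ \cap \mathfrak{a} \subset \mathfrak{j}_+$; since $x \mapsto -w_0^\mathbb{C} \cdot x$ preserves $\mathfrak{j}_+$, we get $-w_0^\mathbb{C} \cdot A \in \mathfrak{j}_+$. Thus both $-w_0 \cdot A$ and $-w_0^\mathbb{C} \cdot A$ lie in the closed chamber $\mathfrak{j}_+$, which is a fundamental domain for the $W^\mathbb{C}$-action on $\mathfrak{j}$ meeting each $W^\mathbb{C}$-orbit in exactly one point.

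It remains to show that $-w_0 \cdot A$ and $-w_0^\mathbb{C} \cdot A$ lie in one and the same $W^\mathbb{C}$-orbit; the previous sentence then forces them to coincide. For $-w_0^\mathbb{C} \cdot A = w_0^\mathbb{C} \cdot (-A)$ this is clear. For $-w_0 \cdot A = w_0 \cdot (-A)$ I would invoke the classical description of the little Weyl group: $W = W(\Sigma)$ is the image, under restriction to $\mathfrak{a}$, of the subgroup $\{\, w \in W^\mathbb{C} \mid w \mathfrak{a} = \mathfrak{a} \,\}$ of $W^\mathbb{C}$ (see, e.g., \cite{Araki62}). In particular $w_0 = \tilde w_0|_\mathfrak{a}$ for some $\tilde w_0 \in W^\mathbb{C}$, and hence $-w_0 \cdot A = \tilde w_0 \cdot (-A) \in W^\mathbb{C} \cdot (-A)$. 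Therefore $-w_0 \cdot A = -w_0^\mathbb{C} \cdot A$, which completes the proof.

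The only genuinely nontrivial ingredient is this last step — exhibiting the longest element $w_0$ of $W$ as the restriction to $\mathfrak{a}$ of an element of $W^\mathbb{C}$ — since a priori $W(\Sigma)$ merely embeds into $\Aut(\Sigma)$, and one must know it is realized inside $W^\mathbb{C}$. I expect this to be the main (indeed the only) obstacle; everything else is a formal consequence of the properties of the Weyl chambers $\mathfrak{a}_+ \subset \mathfrak{j}_+$ already recorded in Section~2.
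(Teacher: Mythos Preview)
Your argument is correct. The key step---that every element of the restricted Weyl group $W$ arises as the restriction to $\mathfrak{a}$ of some element of $W^\mathbb{C}$ stabilizing $\mathfrak{a}$---is indeed a standard structural fact (it follows, for instance, from the identification of $W$ with $N_K(\mathfrak{a})/Z_K(\mathfrak{a})$ together with the realization of $W^\mathbb{C}$ via the compact real form; see Helgason or Araki). Once this is granted, your fundamental-domain argument goes through cleanly: both $-w_0\cdot A$ and $-w_0^\mathbb{C}\cdot A$ lie in $\mathfrak{j}_+$ and in the single $W^\mathbb{C}$-orbit of $-A$, so they coincide.

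As for comparison with the paper: note that the present paper does \emph{not} supply a proof of this lemma at all---it is simply quoted from \cite[Lemma~7.6]{Okuda13cls}. So there is nothing in the paper to compare your argument against. Your write-up is therefore a self-contained proof of a result the paper takes as a black box, and the approach you take (reducing to the chamber and invoking the surjection from the $\mathfrak{a}$-stabilizer in $W^\mathbb{C}$ onto $W$) is the natural one.
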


By combining Lemma \ref{lem:a_to_Satake} with Lemma \ref{lem:bjw},
we obtain that 
\begin{align}
\Psi(\mathfrak{b}) 
= \{\, \Psi_A \in \Map(\Pi,\mathbb{R})^\iota \mid \text{$\Psi_A$ matches $S_\mathfrak{g}$} \,\}. \label{eq:b}
\end{align}
where $\Map(\Pi,\mathbb{R})^\iota$ denote 
the set of all weighted Dynkin diagrams held invariant by $\iota$,
and $S_g$ is the Satake diagram of $\mathfrak{g}$ 
(see Section \ref{subsection:Satake_and_weighted}).

\subsection{The Dynkin--Kostant classification}\label{subsection:DKcls}

In this subsection, we recall the Dynkin--Kostant classification of complex nilpotent orbits in a complex semisimple Lie algebra $\mathfrak{g}_\C$.

Let us denote by $\Hom(\mathfrak{sl}(2,\mathbb{C}),\mathfrak{g}_\mathbb{C})$ 
the set of all complex Lie algebra homomorphisms 
from $\mathfrak{sl}(2,\mathbb{C})$ to $\mathfrak{g}_\mathbb{C}$.
For each $\rho \in \Hom(\mathfrak{sl}(2,\mathbb{C}),\mathfrak{g}_\mathbb{C})$,
there uniquely exists an element $A_\rho$ of $\mathfrak{j}_+$ 
which is conjugate to the element 
\[
\rho \begin{pmatrix} 1 & 0 \\ 0 & -1\end{pmatrix} \in \mathfrak{g}_\mathbb{C}
\]
under the adjoint action on $\mathfrak{g}_\mathbb{C}$.
Then the correspondence $[\rho] \mapsto A_\rho$ gives a map 
from $\Hom(\mathfrak{sl}(2,\mathbb{C}),\mathfrak{g}_\mathbb{C})/{\Int \mathfrak{g}_\C}$
to $\mathfrak{j}_+$.
Malcev \cite{Malcev50} proved that the map 
\[
\Hom(\mathfrak{sl}(2,\mathbb{C}),\mathfrak{g}_\mathbb{C})/{\Int \mathfrak{g}_\C} \hookrightarrow \mathfrak{j}_+,~ [\rho] \mapsto A_\rho
\]
is injective.
We put $\mathcal{H}^n(\mathfrak{j}_+)$ the image of the injective map above.
Obviously, we have 
\[
\mathcal{H}^n(\mathfrak{j}_+) = \mathfrak{j}_+ \cap \left\{ \rho \begin{pmatrix}
1 & 0 \\ 0 & -1 
\end{pmatrix} \mid \rho \in \Hom(\mathfrak{sl}(2,\C), \mathfrak{g}_\C) 
\right\}.
\]

Recall that $\Psi$ in Section \ref{subsection:Satake_and_weighted}
induces a bijection between $\mathfrak{j}_+$ and $\Map(\Pi,\mathbb{R}_{\geq 0})$.
Thus, a classification of $\Psi(\mathcal{H}^n(\mathfrak{j}_+))$ gives 
that of $\Hom(\mathfrak{sl}(2,\mathbb{C}),\mathfrak{g}_\mathbb{C})/{\Int \mathfrak{g}_\C}$.
Dynkin \cite{Dynkin52eng} proved that 
any weight of 
an weighted Dynkin diagram in $\Psi(\mathcal{H}^n(\mathfrak{j}_+))$ 
is given by $0$, $1$ or $2$.
Hence, $\Psi(\mathcal{H}^n(\mathfrak{j}_+))$ 
(and therefore the set $\Hom(\mathfrak{sl}(2,\mathbb{C}),\mathfrak{g}_\mathbb{C})/{\Int \mathfrak{g}_\C}$ is) finite.
Dynkin \cite{Dynkin52eng} also gave 
a complete list of the weighted Dynkin diagrams in $\Psi(\mathcal{H}^n(\mathfrak{j}_+))$
for each simple $\mathfrak{g}_\mathbb{C}$.

By combining the Jacobson--Morozov theorem with the results of Kostant \cite{Kostant59},
we also obtain a bijection 
\[
\Hom(\mathfrak{sl}(2,\mathbb{C}),\mathfrak{g}_\mathbb{C})/{\Int \mathfrak{g}_\C} \simrightarrow \mathcal{N}(\mathfrak{g}_\C)/{\Int \mathfrak{g}_\C},~ [\rho] \mapsto \Int \mathfrak{g}_\C \cdot \rho 
\begin{pmatrix}
0 & 1 \\ 0 & 0 
\end{pmatrix}.
\]
Here $\mathcal{N}(\mathfrak{g}_\C)/{\Int \mathfrak{g}_\C}$ denote the set of all complex nilpotent adjoint orbits in $\mathfrak{g}_\C$.
Thus, the classification of $\Psi(\mathcal{H}^n(\mathfrak{j}_+))$, 
done by Dynkin \cite{Dynkin52eng},
gives a classification of complex nilpotent adjoint orbits in $\mathfrak{g}_\mathbb{C}$ $($see Bala--Cater \cite{Bala-Carter76} or 
Collingwood--McGovern \cite[\text{Section } 3]{Collingwood-McGovern93}
for more details$)$.
This is known as the Dynkin--Kostant classification of complex nilpotent adjoint orbits.

\subsection{Algorithm to classify hyperbolic elements come from nilpotent orbits}\label{subsection:class_nilp}

In this subsection, we give an algorithm to determine the finite subset 
\[
\mathcal{H}^n(\mathfrak{a}_+) := \mathfrak{a}_+ \cap \left\{ \rho \begin{pmatrix}
1 & 0 \\ 0 & -1 
\end{pmatrix} \mid \rho \in \Hom(\mathfrak{sl}(2,\R), \mathfrak{g}) \right\}
\] of $\mathfrak{a}_+$
where $\Hom(\mathfrak{sl}(2,\R), \mathfrak{g})$ is the set of all Lie algebra homomorphisms from $\mathfrak{sl}(2,\R)$ to $\mathfrak{g}$ as in Section \ref{section:purpose}.
We note that similar to the situation for $\mathfrak{g}_\C$, 
for any $\rho \in \Hom(\mathfrak{sl}(2,\R),\mathfrak{g})$,
there uniquely exists $A_\rho \in \mathcal{H}^n(\mathfrak{a}_+)$ such that 
$\rho \begin{pmatrix}
1 & 0 \\ 0 & -1
\end{pmatrix}$ is conjugate to $A_\rho$ under the adjoint action on $\mathfrak{g}$. 

Recall that $\mathfrak{g} = \mathfrak{k} + \mathfrak{p}$ is a real form of $\mathfrak{g}_\C$, $\mathfrak{j} = \sqrt{-1} \mathfrak{t} + \mathfrak{a}$ and $\mathfrak{a}_+ = \mathfrak{a} \cap \mathfrak{j}_+$ in our setting.
Then by \cite[Proposition 4.5 (iii)]{Okuda13cls}, we have $\mathcal{H}^n(\mathfrak{a}_+) = \mathfrak{a}_+ \cap \mathcal{H}^n(\mathfrak{j}_+)$.
Therefore, by Lemma \ref{lem:a_to_Satake},
the following holds
\begin{align}
\Psi(\mathcal{H}^n(\mathfrak{a}_+)) = 
\{ \Psi_A \in \Psi(\mathcal{H}^n(\mathfrak{j}_+)) \mid \text{$\Psi_A$ matches $S_\mathfrak{g}$}\}, \label{eq:nilp_a_+}
\end{align}
where $S_\mathfrak{g}$ is the Satake diagram of $\mathfrak{g}$ 
(see Section \ref{subsection:Satake_and_weighted} for the notation).
Hence, for each $\mathfrak{g}$,
by using the classification of $\Psi(\mathcal{H}^n(\mathfrak{j}_+))$ (see Section \ref{subsection:DKcls})
and the Satake diagram $S_\mathfrak{g}$ of $\mathfrak{g}$,
we obtain a classification of $\Psi(\mathcal{H}^n(\mathfrak{a}_+))$.

\begin{rem}
The finite set $\mathcal{H}^n(\mathfrak{a}_+)$ parametrizes the set $\mathcal{N}_\mathfrak{g}(\mathfrak{g}_\C)/{\Int \mathfrak{g}_\C}$ of complex nilpotent adjoint orbits in $\mathfrak{g}_\C$ that meet the real form $\mathfrak{g}$,
i.e.~there is a natural bijection \[
\varphi : \mathcal{H}^n(\mathfrak{a}_+) \simrightarrow \mathcal{N}_\mathfrak{g}(\mathfrak{g}_\C)/{\Int \mathfrak{g}_\C}, \quad 
\rho \begin{pmatrix}
1 & 0 \\ 
0 & -1
\end{pmatrix} \mapsto 
(\Int \mathfrak{g}_\C) \cdot \rho \begin{pmatrix}
0 & 1 \\ 
0 & 0
\end{pmatrix}
\]
$($see \cite[Section 7.4]{Okuda13cls} for more details$)$.
We note that $\mathcal{H}^n(\mathfrak{a}_+)$ is not bijective to the set $\mathcal{N}(\mathfrak{g})/{\Int \mathfrak{g}}$ of real nilpotent adjoint orbits in $\mathfrak{g}$.
In fact, the surjective map $\phi : \mathcal{N}(\mathfrak{g})/{\Int \mathfrak{g}} \twoheadrightarrow \mathcal{H}^n(\mathfrak{a}_+)$ defined in Section \ref{section:purpose} is not always injective. 
In summary, we have the following commutative diagram:
\[\xymatrix{
\mathcal{N}_\mathfrak{g}(\mathfrak{g}_\C)/{\Int \mathfrak{g}_\C} \ar@{<-}^{\quad \sim}_{\quad \varphi}[r] & \mathcal{H}^n(\mathfrak{a}_+) \\ 
\mathcal{N}(\mathfrak{g})/{\Int \mathfrak{g}} \ar@{->>}[u]^{\text{complexification}} \ar@{->>}[ur]_{\phi} &  
}\]
where the complexification of a real nilpotent adjoint orbit $\mathcal{O}$ in $\mathfrak{g}$ is defined as $(\Int \mathfrak{g}_\C) \cdot \mathcal{O} \subset \mathfrak{g}_\C$.
The survey of classifications of real nilpotent (adjoint) orbits in semisimple Lie algebras can be found in \cite[Chapter 9]{Collingwood-McGovern93}.
\end{rem}

\section{Proof of Theorem \ref{thm:intro}}\label{section:proof}

First, we show the easy part of Theorem \ref{thm:intro} as follows:

\begin{lem}\label{lem:nilp_in_b}
$\mathcal{H}^n(\mathfrak{a}_+) \subset \mathfrak{b} ~(:= \mathfrak{a}^{-w_0})$.
\end{lem}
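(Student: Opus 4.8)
The plan is to reduce the assertion to a purely complex statement, namely that every neutral element of an $\mathfrak{sl}_2$-triple in $\mathfrak{g}_\C$, once conjugated into the closed chamber $\mathfrak{j}_+$, is already fixed by $-w_0^\C$; in symbols, $\mathcal{H}^n(\mathfrak{j}_+) \subset \mathfrak{j}^{-w_0^\C}$. Granting this, the lemma follows immediately: by \cite[Proposition 4.5 (iii)]{Okuda13cls} one has $\mathcal{H}^n(\mathfrak{a}_+) = \mathfrak{a}_+ \cap \mathcal{H}^n(\mathfrak{j}_+) \subset \mathfrak{a} \cap \mathfrak{j}^{-w_0^\C}$, and the right-hand side is exactly $\mathfrak{b}$ by Lemma \ref{lem:bjw}. (In diagrammatic terms, the inclusion $\Psi(\mathcal{H}^n(\mathfrak{j}_+)) \subset \Map(\Pi,\R)^\iota$ together with \eqref{eq:nilp_a_+} and \eqref{eq:b} yields $\Psi(\mathcal{H}^n(\mathfrak{a}_+)) \subset \Psi(\mathfrak{b})$.)

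To prove $\mathcal{H}^n(\mathfrak{j}_+)\subset\mathfrak{j}^{-w_0^\C}$, fix $\rho\in\Hom(\mathfrak{sl}(2,\C),\mathfrak{g}_\C)$ and set $A := A_\rho \in \mathfrak{j}_+$, so $A$ is $\Int\mathfrak{g}_\C$-conjugate to $\rho\begin{pmatrix}1&0\\0&-1\end{pmatrix}$; if $\rho = 0$ then $A = 0$ and there is nothing to prove, so assume $\rho\neq 0$. The key observation is that the sign flip $\begin{pmatrix}1&0\\0&-1\end{pmatrix}\mapsto\begin{pmatrix}-1&0\\0&1\end{pmatrix}$ is realized by the inner automorphism $\sigma$ of $\mathfrak{sl}(2,\C)$ given by conjugation by $g := \begin{pmatrix}0&1\\-1&0\end{pmatrix}\in SL(2,\C)$. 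Integrating $\rho$ to a group homomorphism $\widetilde\rho\colon SL(2,\C)\to\Int\mathfrak{g}_\C$ (possible since $SL(2,\C)$ is simply connected), one gets $-\rho\begin{pmatrix}1&0\\0&-1\end{pmatrix} = \rho\bigl(\sigma\begin{pmatrix}1&0\\0&-1\end{pmatrix}\bigr) = \Ad(\widetilde\rho(g))\,\rho\begin{pmatrix}1&0\\0&-1\end{pmatrix}$, hence $-A$ is $\Int\mathfrak{g}_\C$-conjugate to $A$.

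Finally, since $-w_0^\C$ preserves $\mathfrak{j}_+$ we have $-w_0^\C\cdot A\in\mathfrak{j}_+$, and $-w_0^\C\cdot A = w_0^\C\cdot(-A)$ is $W^\C$-conjugate, hence $\Int\mathfrak{g}_\C$-conjugate, to $-A$, and therefore to $A$. As two elements of the Cartan subalgebra $\mathfrak{j}_\C$ that are $\Int\mathfrak{g}_\C$-conjugate are $W^\C$-conjugate, and $\mathfrak{j}_+$ is a fundamental domain for the $W^\C$-action on $\mathfrak{j}$, the elements $A$ and $-w_0^\C\cdot A$ of $\mathfrak{j}_+$ must coincide; thus $A\in\mathfrak{j}^{-w_0^\C}$, which finishes the reduction. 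I expect the only point needing a careful word to be the integration step together with the verification that $\sigma$ is inner in $SL(2,\C)$; the remaining manipulations are routine bookkeeping with the chamber and the Weyl-group facts already recorded above. (Alternatively, one can run the same flip-and-chamber argument directly over $\R$, replacing $SL(2,\C)$ by the universal cover of $SL(2,\R)$ and $W^\C$ by $W$, but the complex route dovetails with the machinery of Section~2.)
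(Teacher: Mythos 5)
Your proof is correct, and it turns on the same key observation as the paper's: the flip $\begin{pmatrix}1&0\\0&-1\end{pmatrix}\mapsto\begin{pmatrix}-1&0\\0&1\end{pmatrix}$ is inner, so the neutral element $A$ is adjoint-conjugate to $-A$, and a chamber argument then identifies $-A$ with the image of $A$ under the longest Weyl element. The difference is in the routing. The paper argues directly over $\R$: it integrates $\rho$ to $PSL(2,\R)\to\Int\mathfrak{g}$, concludes that $A_\rho$ and $-A_\rho$ are $\Int\mathfrak{g}$-conjugate, and invokes the real fact $(\Int\mathfrak{g}\cdot A)\cap\mathfrak{a}=W\cdot A$ for $A\in\mathfrak{a}_+$ to get $-A_\rho=w_0\cdot A_\rho$ in one step. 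You instead prove the complex statement $\mathcal{H}^n(\mathfrak{j}_+)\subset\mathfrak{j}^{-w_0^\C}$, using the cleaner fact that $\Int\mathfrak{g}_\C$-conjugate elements of a Cartan subalgebra are $W^\C$-conjugate together with the fundamental-domain property of $\mathfrak{j}_+$, and then descend via $\mathcal{H}^n(\mathfrak{a}_+)=\mathfrak{a}_+\cap\mathcal{H}^n(\mathfrak{j}_+)$ and Lemma~\ref{lem:bjw}; both bridging facts are already imported by the paper for other purposes, so nothing new is assumed. Your detour costs those two extra citations but buys the intermediate statement that every diagram in $\Psi(\mathcal{H}^n(\mathfrak{j}_+))$ is $\iota$-invariant (equivalently, complex weighted Dynkin diagrams of nilpotent orbits are fixed by the opposition involution), which is exactly what makes the case-by-case tables of Section~3 consistent and is worth recording. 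A minor point in your favor: integrating over the simply connected group $SL(2,\C)$ sidesteps any question of whether $\tilde\rho$ factors through $PSL(2)$, which the real-form argument has to take for granted (harmlessly, since only conjugacy in $\Int\mathfrak{g}$ is needed).
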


\begin{proof}[Proof of Lemma $\ref{lem:nilp_in_b}$]
Let us fix any $\rho \in \Hom(\mathfrak{sl}(2,\R),\mathfrak{g})$ with 
$A_\rho := \rho \begin{pmatrix}
1 & 0 \\ 0 & -1
\end{pmatrix} \in \mathfrak{a}_+$.
It is enough to show that $-A_\rho = w_0 A_\rho$.
We denote by $\tilde{\rho} : PSL(2,\R) \rightarrow \Int \mathfrak{g}$ the Lie group homomorphism corresponding to $\rho : \mathfrak{sl}(2,\R) \rightarrow \mathfrak{g}$.

One can observe that, in $\mathfrak{sl}(2,\R)$, 
the two elements 
$\begin{pmatrix}
1 & 0 \\ 0 & -1
\end{pmatrix}$
and 
$\begin{pmatrix}
-1 & 0 \\ 0 & 1
\end{pmatrix}$
are conjugate under the adjoint action of $PSL(2,\R)$.
Therefore two elements $A_\rho, -A_\rho \in \mathfrak{g}$ are conjugate under the adjoint action of $\tilde{\rho}(PSL(2,\R)) \subset \Int \mathfrak{g}$.
On the other hand, 
for any $A \in \mathfrak{a}_+ \subset \mathfrak{g}$,
we have that \[
(\Int \mathfrak{g} \cdot A) \cap \mathfrak{a} = W \cdot A
\]
(see \cite[Lemma 7.2]{Okuda13cls} for more details).
Hence for $A_\rho \in \mathfrak{a}_+$, the element $-A_\rho$ is in $W \cdot A_\rho$.
Since $-A_\rho \in -\mathfrak{a}_+$, we have $- A_\rho = w_0 A_\rho$.
\end{proof}

In the rest of this section, 
we give a proof of non-trivial part 
$\mathfrak{b} \subset \mathbb{R}\text{-}\mathrm{span} (\mathcal{H}^n(\mathfrak{a}_+))$
of Theorem \ref{thm:intro}.

Recall that $\Psi : \mathfrak{j} \rightarrow \Map(\Pi,\mathbb{R})$ 
in Section \ref{subsection:Satake_and_weighted}
is a linear isomorphism.
Therefore, to complete the proof of Theorem \ref{thm:intro}, we only need to show that 
\begin{equation}
\Psi(\mathfrak{b}) \subset \mathbb{R}\text{-}\mathrm{span} \Psi(\mathcal{H}^n(\mathfrak{a}_+)) 
\label{eq:goal}
\end{equation}

We note that our claim for semisimple $\mathfrak{g}$ 
is reduced to that on each simple factor of $\mathfrak{g}$.
Furthermore, in the case where $\mathfrak{g}$ is a complex simple Lie algebra, 
our claim is reduced to the case where $\mathfrak{g}'$ is a split real form of $\mathfrak{g}$.
Thus, it is enough to show \eqref{eq:goal} 
for the case where 
$\mathfrak{g}_\mathbb{C}$ is simple and 
$\mathfrak{g}$ is non-compact real form of $\mathfrak{g}_\mathbb{C}$.
To do this, we shall find weighted Dynkin diagrams 
$\Psi_1,\dots,\Psi_n$ in $\Psi(\mathcal{H}^n(\mathfrak{a}_+))$
such that $\{ \Psi_1,\dots, \Psi_n \}$ 
becomes a basis of $\Psi(\mathfrak{b})$ for each non-compact real simple Lie algebra $\mathfrak{g}$.

In the rest of this section,
for each complex simple $\mathfrak{g}_\mathbb{C}$, 
we give an explicit form of $\Map(\Pi,\mathbb{R})^\iota$
(see Section \ref{subsection:WDD_hyp} for the definition of $\iota$)
and some examples in $\Psi(\mathcal{H}^n(\mathfrak{j}_+))$ from the list given by Dynkin \cite{Dynkin52eng} 
(we will refer \cite{Collingwood-McGovern93} for classifications of weighted Dynkin diagrams in $\Psi(\mathcal{H}^n(\mathfrak{j}_+))$).
Then 
for each non-compact real form $\mathfrak{g}$ of $\mathfrak{g}_\mathbb{C}$,
we give the Satake diagram $S_\mathfrak{g}$ of $\mathfrak{g}$,
which can be found in \cite{Araki62}, and the explicit form of 
$\Psi(\mathfrak{b})$ by using \eqref{eq:b} in Section \ref{subsection:WDD_hyp}.
Finally, for each $\mathfrak{g}$, 
we give an example of a basis $\{ \Psi_1,\dots,\Psi_n \}$ of $\Psi(\mathfrak{b})$ 
with 
\[
\Psi_i \in \Psi(\mathcal{H}^n(\mathfrak{a}_+)) = 
\{ \Psi_A \in \Psi(\mathcal{H}^n(\mathfrak{j}_+)) \mid \text{$\Psi_A$ matches $S_\mathfrak{g}$}\} 
\]
(see Section \ref{subsection:class_nilp} for the details).
Then the proof of Theorem \ref{thm:intro} will be completed.

\begin{rem}\label{rem:even}
As in the following subsections,
our basis $\{ \Psi_1,\dots,\Psi_n \}$ of $\Psi(\mathfrak{b})$ consists of weighted Dynkin diagrams of even nilpotent orbits,
where ``even'' means that any weight of $\Psi_i$ is $0$ or $2$.
\end{rem}

\subsection{Type $A_l$}

Let us consider the case where 
$\mathfrak{g}_\mathbb{C}$ is of type $A_l$ for $l \geq 1$,
that is,
$\mathfrak{g}_\mathbb{C} \simeq \mathfrak{sl}(l+1,\mathbb{C})$.
Then we have
\[
\Map(\Pi,\mathbb{R})^\iota 
= \left\{ \begin{xy}
	*++!D{a_1} *\cir<2pt>{}        ="A",
	(10,0) *++!D{a_2} *\cir<2pt>{} ="B",
	(30,0) *++!D{a_{l-1}} *\cir<2pt>{} ="D",
	(40,0) *++!D{a_l} *\cir<2pt>{} ="E",
	\ar@{-} "A";"B"
	\ar@{-} "B";(15,0)
	\ar@{.} (15,0);(25,0)^*U{\cdots}
	\ar@{-} (25,0);"D"
	\ar@{-} "D";"E"
\end{xy} ~\middle|~ 
a_i = a_{l+1-i} \text{ for } i =1 ,\dots,l
\right\}
\]

By \cite[Section 3.6]{Collingwood-McGovern93}, 
we can find some examples of weighted Dynkin diagrams in 
$\Psi(\mathcal{H}^n(\mathfrak{j}_+))$ as follows:

\begin{center}
\begin{longtable}{ll} \toprule
	Symbol & Weighted Dynkin diagram in $\Psi(\mathcal{H}^n(\mathfrak{j}_+))$ \\ \midrule
	$[l+1]$ & \begin{xy}
	*++!D{2} *\cir<2pt>{}        ="A",
	(6,0) *++!D{2} *\cir<2pt>{} ="B",
	(15,0) *++!D{2} *\cir<2pt>{} ="D",
	(21,0) *++!D{2} *\cir<2pt>{} ="E",
	\ar@{-} "A";"B"
	\ar@{-} "B";(9,0)
	\ar@{.} (9,0);(12,0)^*U{\cdots}
	\ar@{-} (12,0);"D"
	\ar@{-} "D";"E"
\end{xy}  \\ 	
	
	$[2s+1,1^{l-2s}]$ & \begin{xy}
	*++!D{2} *\cir<2pt>{}        ="A",
	(3,0) = "A_1",
	(8,0) = "A_2",
	(11,0) *++!D{2} *\cir<2pt>{} ="B",
	(17,0) *++!D{2} *++!U{\alpha_s} *\cir<2pt>{} ="C",
	(23,0) *++!D{0} *\cir<2pt>{} ="D",
	(26,0) = "D_1",
	(31,0) = "D_2",
	(34,0) *++!D{0} *\cir<2pt>{} ="E",
	(40,0) *++!D{2} *++!U{\alpha_{l+1-s}} *\cir<2pt>{} ="F",
	(46,0) *++!D{2} *\cir<2pt>{} ="G",
	(49,0) = "G_1",
	(54,0) = "G_2",
	(57,0) *++!D{2} *\cir<2pt>{} ="H",
	\ar@{-} "A";"A_1"
	\ar@{.} "A_1";"A_2"^*!U{\cdots}
	\ar@{-} "A_2";"B"
	\ar@{-} "B";"C"
	\ar@{-} "C";"D"
	\ar@{-} "D";"D_1"
	\ar@{.} "D_1";"D_2"^*!U{\cdots}
	\ar@{-} "D_2";"E"
	\ar@{-} "E";"F"
	\ar@{-} "F";"G"
	\ar@{-} "G";"G_1"
	\ar@{.} "G_1";"G_2"^*!U{\cdots}
	\ar@{-} "G_2";"H"
\end{xy} \\ 	
	$[(2s+1)^2,1^{l-4s-1}]$ & \begin{xy}
	(0,0) *++!D{0} *\cir<2pt>{}        ="AAAA",
	(3,0) *++!D{2} *\cir<2pt>{}        ="AAA",
	(6,0)*++!D{0} *\cir<2pt>{}        ="AA",
	(9,0)*++!D{2} *\cir<2pt>{}        ="A",
	(12,0) = "A_1",
	(17,0) = "A_2",
	(20,0) *++!D{0} *\cir<2pt>{} ="B",
	(23,0) *++!D{2} *++!U{\alpha_{2s}} *\cir<2pt>{} ="C",
	(26,0) *++!D{0} *\cir<2pt>{} ="DD",
	(29,0) *++!D{0} *\cir<2pt>{} ="D",
	(32,0) = "D_1",
	(37,0) = "D_2",
	(40,0) *++!D{0} *\cir<2pt>{} ="E",
	(43,0) *++!D{0} *\cir<2pt>{} ="EE",
	(46,0) *++!D{2} *++!U{\alpha_{l+1-2s}} *\cir<2pt>{} ="F",
	(49,0) *++!D{0} *\cir<2pt>{} ="G",
	(52,0) = "G_1",
	(57,0) = "G_2",
	(60,0) *++!D{2} *\cir<2pt>{} ="H",
	(63,0) *++!D{0} *\cir<2pt>{} ="HH",
	(66,0) *++!D{2} *\cir<2pt>{} ="HHH",
	(69,0) *++!D{0} *\cir<2pt>{} ="HHHH",
	\ar@{-} "AAAA";"AAA"
	\ar@{-} "AAA";"AA"
	\ar@{-} "AA";"A"
	\ar@{-} "A";"A_1"
	\ar@{.} "A_1";"A_2"^*!U{\cdots}
	\ar@{-} "A_2";"B"
	\ar@{-} "B";"C"
	\ar@{-} "C";"DD"
	\ar@{-} "DD";"D"
	\ar@{-} "D";"D_1"
	\ar@{.} "D_1";"D_2"^*!U{\cdots}
	\ar@{-} "D_2";"E"
	\ar@{-} "E";"EE"
	\ar@{-} "EE";"F"
	\ar@{-} "F";"G"
	\ar@{-} "G";"G_1"
	\ar@{.} "G_1";"G_2"^*!U{\cdots}
	\ar@{-} "G_2";"H"
	\ar@{-} "H";"HH"
	\ar@{-} "HH";"HHH"
	\ar@{-} "HHH";"HHHH"
\end{xy} \\ 
\bottomrule
\end{longtable}
\end{center}

Let $\mathfrak{g}$ be a non-compact real form of $\mathfrak{g}_\mathbb{C}$.
Then the Satake diagram $S_\mathfrak{g}$ and $\Psi(\mathfrak{b})$ are given as follows:

\begin{center}
	\begin{longtable}{lll} \toprule
	$\mathfrak{g}$ & $S_\mathfrak{g}$ & $\Psi(\mathfrak{b})$ \\ \midrule
	
$\mathfrak{sl}(l+1,\mathbb{R})$ & \begin{xy}
	*\cir<2pt>{}        ="A",
	(6,0) *\cir<2pt>{} ="B",
	(15,0) *\cir<2pt>{} ="C",
	(21,0) *\cir<2pt>{} ="D",
	\ar@{-} "A";"B"
	\ar@{-} "B"; (9,0)
	\ar@{.} (9,0) ; (12,0)
	\ar@{-} (12,0) ; "C"
	\ar@{-} "C";"D"
\end{xy} 
& $\left\{ \begin{xy}
	*++!D{b_1} *\cir<2pt>{}        ="A",
	(6,0) *++!D{b_2} *\cir<2pt>{} ="B",
	(15,0) *++!D{b_2} *\cir<2pt>{} ="D",
	(21,0) *++!D{b_1} *\cir<2pt>{} ="E",
	\ar@{-} "A";"B"
	\ar@{-} "B";(9,0)
	\ar@{.} (9,0);(12,0)^*U{\cdots}
	\ar@{-} (12,0);"D"
	\ar@{-} "D";"E"
\end{xy} 
\right\}$
\\ \hline
	
\raisebox{-0.3cm}{\shortstack{$\mathfrak{su}^*(2k)$ \\ $(2k=l+1)$}} & \begin{xy}
	 *{\bullet}        ="A",
	(3,0) *\cir<2pt>{} ="B",
	(6,0) *{\bullet} ="C",
	(9,0) *\cir<2pt>{} ="C'",
	(15,0) *\cir<2pt>{} ="D'",
	(18,0) *{\bullet} ="D",
	(21,0) *\cir<2pt>{} ="E",
	(24,0) *+!D{\alpha_{2k-1}} *{\bullet} ="F",
	\ar@{-} "A";"B"
	\ar@{-} "B";"C"
	\ar@{-} "C";"C'"
	\ar@{-} "C'"; (11,0)
	\ar@{.} (11,0) ; (13,0)
	\ar@{-} (13,0) ; "D'"
	\ar@{-} "D'";"D"
	\ar@{-} "D";"E"
	\ar@{-} "E";"F"
\end{xy} & $\left\{ 
\begin{xy}
	*++!D{0} *\cir<2pt>{}   ="A",
	(4,0) *++!D{b_1} *\cir<2pt>{} ="B",
	(8,0) *++!D{0} *\cir<2pt>{} ="C",
	(12,0) *++!D{b_2} *\cir<2pt>{} ="C'",
	(21,0) *++!D{b_{2}} *\cir<2pt>{} ="D'",
	(25,0) *++!D{0} *\cir<2pt>{} ="D",
	(29,0) *++!D{b_{1}} *\cir<2pt>{} ="E",
	(33,0) *++!D{0} *\cir<2pt>{} ="F",
	\ar@{-} "A";"B"
	\ar@{-} "B";"C"
	\ar@{-} "C";"C'"
	\ar@{-} "C'"; (15,0)
	\ar@{.} (15,0) ; (18,0)^*U{\cdots}
	\ar@{-} (18,0) ; "D'"
	\ar@{-} "D'";"D"
	\ar@{-} "D";"E"
	\ar@{-} "E";"F"
\end{xy}
\right\}$
\\ \hline

\raisebox{-0.3cm}{\shortstack{$\mathfrak{su}(p,q)$ \\ $(p+q = l+1)$ \\ $(p>q+1)$}} & 
\raisebox{1cm}{
\begin{xy}
	*+!D{\alpha_1} *\cir<2pt>{} ="A",
	(6,0) *\cir<2pt>{} ="B",
	(15,0) *+!D{\alpha_q} *\cir<2pt>{} ="C",
	(21,0)  *{\bullet} ="D",
	(27,-4) *{\bullet} ="E",
	(27,-13) *{\bullet} ="F",
	(21,-17) *{\bullet} ="G",
	(15,-17) *+!U{\alpha_{l+1-q}} *\cir<2pt>{} ="H",
	(6,-17) *\cir<2pt>{} ="I",
	(0,-17) *+!U{\alpha_{l}} *\cir<2pt>{} ="J",
	\ar@{-} "A";"B"
	\ar@{-} "B"; (9,0)
	\ar@{.} (9,0) ; (12,0)
	\ar@{-} (12,0) ;"C"
	\ar@{-} "C";"D"
	\ar@{-} "D";"E"
    \ar@{-} "E";(27,-7)
    \ar@{.} (27,-7);(27,-10)
    \ar@{-} (27,-10);"F"
	\ar@{-} "G";"F"
	\ar@{-} "H";"G"
	\ar@{-} (12,-17) ;"H"
	\ar@{.} (12,-17) ; (9,-17)
	\ar@{-} "I"; (9,-17)
	\ar@{-} "J";"I"
	\ar@{<->} (0,-2);(0,-15)
	\ar@{<->} (6,-2);(6,-15)
	\ar@{<->} (15,-2);(15,-15)
\end{xy} 
}
& 
$\left\{ 
\raisebox{1cm}{
\begin{xy}
	*+!D{b_1} *\cir<2pt>{} ="A",
	(6,0) *+!D{b_2} *\cir<2pt>{} ="B",
	(15,0) *+!D{b_q} *\cir<2pt>{} ="C",
	(21,0)  *++!D{0} *\cir<2pt>{} ="D",
	(27,-4) *++!L{0} *\cir<2pt>{} ="E",
	(27,-13) *++!L{0} *\cir<2pt>{} ="F",
	(21,-17) *++!D{0} *\cir<2pt>{} ="G",
	(15,-17) *+!U{b_{q}} *\cir<2pt>{} ="H",
	(6,-17) *+!U{b_2} *\cir<2pt>{} ="I",
	(0,-17) *+!U{b_{1}} *\cir<2pt>{} ="J",
	\ar@{-} "A";"B"
	\ar@{-} "B"; (9,0)
	\ar@{.} (9,0) ; (12,0)^*U{\cdots}
	\ar@{-} (12,0) ;"C"
	\ar@{-} "C";"D"
	\ar@{-} "D";"E"
    \ar@{-} "E";(27,-7)
    \ar@{.} (27,-7);(27,-10)
    \ar@{-} (27,-10);"F"
	\ar@{-} "G";"F"
	\ar@{-} "H";"G"
	\ar@{-} (12,-17) ;"H"
	\ar@{.} (12,-17) ; (9,-17)
	\ar@{-} "I"; (9,-17)
	\ar@{-} "J";"I"
\end{xy}
}
\right\}$
\\ \hline	
\raisebox{-0.3cm}{\shortstack{$\mathfrak{su}(k+1,k)$ \\ $(2k = l)$}} & 
\raisebox{0.5cm}{
\begin{xy}
	*+!D{\alpha_1} *\cir<2pt>{} ="A",
	(6,0) *\cir<2pt>{} ="B",
	(15,0)  *+!LD{\alpha_{k}} *\cir<2pt>{} ="C",
	(15,-10) *+!LU{\alpha_{k+1}} *\cir<2pt>{} ="E",
	(6,-10) *\cir<2pt>{} ="F",
	(0,-10) *+!U{\alpha_{2k-1}} *\cir<2pt>{} ="G",
	\ar@{-} "A";"B"
	\ar@{-} "B";(9,0) 
	\ar@{.} (9,0);(12,0)
	\ar@{-} (12,0);"C"
	\ar@/^2mm/@{-} "C";"E"
	\ar@{-} "E";(12,-10)
	\ar@{.} (12,-10);(9,-10)
	\ar@{-} (9,-10);"F"
	\ar@{-} "F";"G"
	\ar@{<->} (0,-2);(0,-8)
	\ar@{<->} (6,-2);(6,-8)
	\ar@{<->} (15,-2);(15,-8)
\end{xy}
} 
& $\left\{ 
\raisebox{0.5cm}{
\begin{xy}
	*+!D{b_1} *\cir<2pt>{} ="A",
	(6,0) *+!D{b_2} *\cir<2pt>{} ="B",
	(15,0)  *+!LD{b_{k}} *\cir<2pt>{} ="C",
	(15,-10) *+!LU{b_{k}} *\cir<2pt>{} ="E",
	(6,-10) *+!U{b_{2}} *\cir<2pt>{} ="F",
	(0,-10) *+!U{b_1} *\cir<2pt>{} ="G",
	\ar@{-} "A";"B"
	\ar@{-} "B";(9,0) 
	\ar@{.} (9,0);(12,0)^*U{\cdots}
	\ar@{-} (12,0);"C"
	\ar@/^2mm/@{-} "C";"E"
	\ar@{-} "E";(12,-10)
	\ar@{.} (12,-10);(9,-10)
	\ar@{-} (9,-10);"F"
	\ar@{-} "F";"G"
\end{xy}}
\right\}$
\\ \hline
\raisebox{-0.3cm}{\shortstack{$\mathfrak{su}(k,k)$ \\ $(2k = l+1)$}} & 
\raisebox{0.5cm}{
\begin{xy}
	*+!D{\alpha_1} *\cir<2pt>{} ="A",
	(6,0) *\cir<2pt>{} ="B",
	(15,0)  *\cir<2pt>{} ="C",
	(21,-5) *+!LD{\alpha_{k}} *\cir<2pt>{} ="D",
	(15,-10) *\cir<2pt>{} ="E",
	(6,-10) *\cir<2pt>{} ="F",
	(0,-10) *+!U{\alpha_{2k-1}} *\cir<2pt>{} ="G",
	\ar@{-} "A";"B"
	\ar@{-} "B";(9,0) 
	\ar@{.} (9,0);(12,0)
	\ar@{-} (12,0);"C"
	\ar@{-} "C";"D"
	\ar@{-} "D";"E"
	\ar@{-} "E";(12,-10)
	\ar@{.} (12,-10);(9,-10)
	\ar@{-} (9,-10);"F"
	\ar@{-} "F";"G"
	\ar@{<->} (0,-2);(0,-8)
	\ar@{<->} (6,-2);(6,-8)
	\ar@{<->} (15,-2);(15,-8)
\end{xy}
} 
& $\left\{ 
\raisebox{0.5cm}{
\begin{xy}
	*+!D{b_1} *\cir<2pt>{} ="A",
	(6,0) *+!D{b_2} *\cir<2pt>{} ="B",
	(15,0)  *+!LD{b_{k-1}} *\cir<2pt>{} ="C",
	(21,-5) *+!LD{b_{k}} *\cir<2pt>{} ="D",
	(15,-10) *+!LU{b_{k-1}} *\cir<2pt>{} ="E",
	(6,-10) *+!U{b_{2}} *\cir<2pt>{} ="F",
	(0,-10) *+!U{b_1} *\cir<2pt>{} ="G",
	\ar@{-} "A";"B"
	\ar@{-} "B";(9,0) 
	\ar@{.} (9,0);(12,0)^*U{\cdots}
	\ar@{-} (12,0);"C"
	\ar@{-} "C";"D"
	\ar@{-} "D";"E"
	\ar@{-} "E";(12,-10)
	\ar@{.} (12,-10);(9,-10)
	\ar@{-} (9,-10);"F"
	\ar@{-} "F";"G"
\end{xy}}
\right\}$
\\
\bottomrule
	\end{longtable}
\end{center}

Therefore for each $\mathfrak{g}$,
we can find a basis of $\Psi(\mathfrak{b})$ 
by taking some weighted Dynkin diagrams 
in $\Psi(\mathcal{H}^n(\mathfrak{a}_+)) = 
\{ \Psi_A \in \Psi(\mathcal{H}^n(\mathfrak{j}_+)) \mid \text{$\Psi_A$ matches $S_\mathfrak{g}$}\}$ as follows:

\begin{center}
\begin{longtable}{ll} \toprule
$\mathfrak{g}$ & Example of basis of $\Psi(\mathfrak{b})$ \\ \midrule
$\mathfrak{sl}(2k,\mathbb{R})$ $(2k=l+1)$ & $[3,1^{2k-3}],[5,1^{2k-5}],\dots,[2k-1,1], [2k]$ \\
$\mathfrak{sl}(2k+1,\mathbb{R})$ $(2k=l)$ & $[3,1^{2k-2}],[5,1^{2k-4}],\dots,[2k+1]$ \\
$\mathfrak{su}^*(4m)$ $(4m=l+1)$ & $[3^2,1^{4m-6}],[5^2,1^{4m-10}],\dots,[(2m-1)^2,1^2], [(2m)^2]$ \\
$\mathfrak{su}^*(4m+2)$ $(4m=l-1)$& $[3^2,1^{4m-4}],[5^2,1^{4m-8}],\dots,[(2m+1)^2]$ \\
$\mathfrak{su}(p,q) \ (p+q=l+1,p>q+1)$ & $[3,1^{l-2}],[5,1^{l-4}],\dots,[2q+1,1^{l-2q}]$ \\
$\mathfrak{su}(k+1,k)$ $(2k=l)$ & $[3,1^{2k-2}],[5,1^{2k-4}],\dots,[2k-1,1^2],[2k+1]$ \\
$\mathfrak{su}(k,k)$ $(2k=l+1)$ & $[3,1^{2k-3}],[5,1^{2k-5}],\dots,[2k-1,1],[2k]$ \\
\bottomrule
\end{longtable}
\end{center}

\subsection{Type $B_l$}

Let us consider the case where 
$\mathfrak{g}_\mathbb{C}$ is of type $B_l$ for $l \geq 1$,
that is, 
$\mathfrak{g}_\mathbb{C} \simeq \mathfrak{so}(2l+1,\mathbb{C})$.
Then we have
\[
\Map(\Pi,\mathbb{R})^\iota =
\Map(\Pi,\mathbb{R}) 
= \left\{
\begin{xy}
	*++!D{a_1} *\cir<2pt>{}        ="A",
	(10,0) *++!D{a_2} *\cir<2pt>{} ="B",
	(25,0) *++!D{a_{l-1}} *\cir<2pt>{} ="C",
	(35,0) *++!D{a_{l}} *\cir<2pt>{} ="D",
	\ar@{-} "A";"B"
	\ar@{-} "B"; (15,0)
	\ar@{.} (15,0) ; (20,0)^*U{\cdots}
	\ar@{-} (20,0) ; "C"
	\ar@{=>} "C";"D"
\end{xy} 
\right\}
\]

By \cite[Section 5.3]{Collingwood-McGovern93}, 
we can find some examples of weighted Dynkin diagrams in 
$\Psi(\mathcal{H}^n(\mathfrak{j}_+))$ as follows:
\begin{center}
\begin{longtable}{ll} \toprule
	Symbol & Weighted Dynkin diagram in $\Psi(\mathcal{H}^n(\mathfrak{j}_+))$ \\ \midrule
	
	$[2l+1]$ & \begin{xy}
	*++!D{2} *\cir<2pt>{}        ="A",
	(10,0) *++!D{2} *\cir<2pt>{} ="B",
	(25,0) *++!D{2} *\cir<2pt>{} ="C",
	(35,0) *++!D{2} *\cir<2pt>{} ="D",
	\ar@{-} "A";"B"
	\ar@{-} "B"; (15,0)
	\ar@{.} (15,0) ; (20,0)^*U{\cdots}
	\ar@{-} (20,0) ; "C"
	\ar@{=>} "C";"D"
\end{xy} \\ 

    \raisebox{-0.3cm}{$[2s+1,1^{2l-2s}]$}& \begin{xy}
	*++!D{2} *\cir<2pt>{}        ="A",
	(5,0) *++!D{2} *\cir<2pt>{} ="B",
	(15,0) *++!D{2} *+!U{\alpha_s} *\cir<2pt>{} ="C",
	(20,0) *++!D{0} *\cir<2pt>{} ="D",
	(30,0) *++!D{0} *\cir<2pt>{} ="E",	
	(35,0) *++!D{0} *\cir<2pt>{} ="F",
	\ar@{-} "A";"B"
	\ar@{-} "B"; (8,0)
	\ar@{.} (8,0) ; (12,0)^*U{\cdots}
	\ar@{-} (12,0) ; "C"
	\ar@{-} "C";"D"
	\ar@{-} "D"; (23,0)
	\ar@{.} (23,0) ; (27,0)^*U{\cdots}
	\ar@{-} (27,0) ; "E"
	\ar@{=>} "E";"F"
\end{xy} \\ 
\bottomrule
\end{longtable}
\end{center}

Let $\mathfrak{g}$ be a non-compact real form of $\mathfrak{g}_\mathbb{C}$.
Then the Satake diagram $S_\mathfrak{g}$ and $\Psi(\mathfrak{b})$ are given as follows:

\begin{center}
	\begin{longtable}{lll} \toprule
	$\mathfrak{g}$ & $S_\mathfrak{g}$ & $\Psi(\mathfrak{b})$ \\ \midrule
	
\raisebox{-0.5cm}{\shortstack{$\mathfrak{so}(p,q)$ \\ $(p+q=2l+1)$ \\ $(p>q+1)$}} & 
\begin{xy}
	*\cir<2pt>{}        ="A",
	(5,0) *\cir<2pt>{} ="B",
	(15,0) *+!U{\alpha_q} *\cir<2pt>{} ="C",
	(20,0) *{\bullet} ="D",
	(30,0) *{\bullet} ="E",	
	(35,0)  *{\bullet} ="F",
	\ar@{-} "A";"B"
	\ar@{-} "B"; (8,0)
	\ar@{.} (8,0) ; (12,0)
	\ar@{-} (12,0) ; "C"
	\ar@{-} "C";"D"
	\ar@{-} "D"; (23,0)
	\ar@{.} (23,0) ; (27,0)
	\ar@{-} (27,0) ; "E"
	\ar@{=>} "E";"F"
\end{xy} & 
$\left\{
\begin{xy}
	*++!D{b_1} *\cir<2pt>{}        ="A",
	(5,0) *++!D{b_2} *\cir<2pt>{} ="B",
	(15,0) *++!D{b_q} *\cir<2pt>{} ="C",
	(20,0) *++!D{0} *\cir<2pt>{} ="D",
	(30,0) *++!D{0} *\cir<2pt>{} ="E",	
	(35,0)  *++!D{0} *\cir<2pt>{} ="F",
	\ar@{-} "A";"B"
	\ar@{-} "B"; (8,0)
	\ar@{.} (8,0) ; (12,0)
	\ar@{-} (12,0) ; "C"
	\ar@{-} "C";"D"
	\ar@{-} "D"; (23,0)
	\ar@{.} (23,0) ; (27,0)
	\ar@{-} (27,0) ; "E"
	\ar@{=>} "E";"F"
\end{xy}
\right\}$
\\ \hline
	
$\mathfrak{so}(l+1,l)$ & \begin{xy}
    *\cir<2pt>{}        ="A",
	(10,0) *\cir<2pt>{} ="B",
	(25,0) *\cir<2pt>{} ="C",
	(35,0) *\cir<2pt>{} ="D",
	\ar@{-} "A";"B"
	\ar@{-} "B"; (15,0)
	\ar@{.} (15,0) ; (20,0)
	\ar@{-} (20,0) ; "C"
	\ar@{=>} "C";"D"
\end{xy} &
$\left\{
\begin{xy}
	*++!D{b_1} *\cir<2pt>{}        ="A",
	(10,0) *++!D{b_2} *\cir<2pt>{} ="B",
	(25,0) *++!D{b_{l-1}} *\cir<2pt>{} ="C",
	(35,0) *++!D{b_{l}} *\cir<2pt>{} ="D",
	\ar@{-} "A";"B"
	\ar@{-} "B"; (15,0)
	\ar@{.} (15,0) ; (20,0)^*U{\cdots}
	\ar@{-} (20,0) ; "C"
	\ar@{=>} "C";"D"
\end{xy} 
\right\}$
\\ 
\bottomrule
	\end{longtable}
\end{center}

Therefore for each $\mathfrak{g}$,
we can find a basis of $\Psi(\mathfrak{b})$ 
by taking some weighted Dynkin diagrams 
in $\Psi(\mathcal{H}^n(\mathfrak{a}_+)) = 
\{ \Psi_A \in \Psi(\mathcal{H}^n(\mathfrak{j}_+)) \mid \text{$\Psi_A$ matches $S_\mathfrak{g}$}\}$ as follows:

\begin{center}
\begin{longtable}{ll} \toprule
$\mathfrak{g}$ & Example of basis of $\Psi(\mathfrak{b})$ \\ \midrule
$\mathfrak{so}(p,q) \ (p+q=2l+1,p>q+1)$ & $[3,1^{2l-2}],[5,1^{2l-4}],\dots,[2q+1,1^{2l-2q}]$ \\
$\mathfrak{so}(l+1,l)$ & $[3,1^{2l-2}],[5,1^{2l-4}],\dots,[2l+1]$ \\
\bottomrule
\end{longtable}
\end{center}

\subsection{Type $C_l$}

Let us consider the case where 
$\mathfrak{g}_\mathbb{C}$ is of type $C_l$ for $l \geq 1$,
that is, 
$\mathfrak{g}_\mathbb{C} \simeq \mathfrak{sp}(l,\mathbb{C})$.
Then we have
\[
\Map(\Pi,\mathbb{R})^\iota = \Map(\Pi,\mathbb{R})
= \left\{
\begin{xy}
        *++!D{a_1} *\cir<2pt>{}        ="A",
	(10,0) *++!D{a_2} *\cir<2pt>{} ="B",
	(25,0) *++!D{a_{l-1}} *\cir<2pt>{} ="C",
	(35,0) *++!D{a_l} *\cir<2pt>{} ="D",
	\ar@{-} "A";"B"
	\ar@{-} "B"; (15,0)
	\ar@{.} (15,0) ; (20,0)^*U{\cdots}
	\ar@{-} (20,0) ; "C"
	\ar@{<=} "C";"D"
\end{xy}
\right\}
\]

By \cite[Section 5.3]{Collingwood-McGovern93}, 
we can find some examples of weighted Dynkin diagrams in 
$\Psi(\mathcal{H}^n(\mathfrak{j}_+))$ as follows:
\begin{center}
\begin{longtable}{ll} \toprule
	Symbol & Weighted Dynkin diagram in $\Psi(\mathcal{H}^n(\mathfrak{j}_+))$ \\ \midrule
	$[2^{l}]$ & \begin{xy}
	*++!D{0} *\cir<2pt>{} ="A",
	(3,0) = "A_1",
	(9,0) = "A_2",
	(12,0) *++!D{0} *\cir<2pt>{} ="B",
	(18,0) *++!D{0} *\cir<2pt>{} ="C",
	(24,0) *++!D{0} *\cir<2pt>{} ="D",
	(27,0) = "D_1",
	(33,0) = "D_2",
	(36,0) *++!D{0} *\cir<2pt>{} ="E",
	(42,0) *++!D{2} *\cir<2pt>{} ="F",	
	\ar@{-} "A";"A_1"
	\ar@{.} "A_1";"A_2"^*U{\cdots}
	\ar@{-} "A_2";"B"
	\ar@{-} "B";"C"
	\ar@{-} "C";"D"
	\ar@{-} "D";"D_1"
	\ar@{.} "D_1";"D_2"^*U{\cdots}
	\ar@{-} "D_2";"E"
	\ar@{<=} "E";"F"
\end{xy} \\
	\raisebox{-0.3cm}{\shortstack{$[2s+2,2^{l-s}]$}} & \begin{xy}
	*++!D{2} *\cir<2pt>{} ="A",
	(3,0) = "A_1",
	(9,0) = "A_2",
	(12,0) *++!D{2} *\cir<2pt>{} ="B",
	(18,0) *++!D{2} *++!U{\alpha_s} *\cir<2pt>{} ="C",
	(24,0) *++!D{0} *\cir<2pt>{} ="D",
	(27,0) = "D_1",
	(33,0) = "D_2",
	(36,0) *++!D{0} *\cir<2pt>{} ="E",
	(42,0) *++!D{2} *\cir<2pt>{} ="F",	
	\ar@{-} "A";"A_1"
	\ar@{.} "A_1";"A_2"^*U{\cdots}
	\ar@{-} "A_2";"B"
	\ar@{-} "B";"C"
	\ar@{-} "C";"D"
	\ar@{-} "D";"D_1"
	\ar@{.} "D_1";"D_2"^*U{\cdots}
	\ar@{-} "D_2";"E"
	\ar@{<=} "E";"F"
\end{xy} \\ 
$[(2s+1)^2,1^{2l-4s-2}]$ & \begin{xy}
	(0,0) *++!D{0} *\cir<2pt>{}        ="AAAA",
	(3,0) *++!D{2} *\cir<2pt>{}        ="AAA",
	(6,0)*++!D{0} *\cir<2pt>{}        ="AA",
	(9,0)*++!D{2} *\cir<2pt>{}        ="A",
	(12,0) = "A_1",
	(17,0) = "A_2",
	(20,0) *++!D{0} *\cir<2pt>{} ="B",
	(23,0) *++!D{2} *++!U{\alpha_{2s}} *\cir<2pt>{} ="C",
	(26,0) *++!D{0} *\cir<2pt>{} ="DD",
	(29,0) *++!D{0} *\cir<2pt>{} ="D",
	(32,0) = "D_1",
	(37,0) = "D_2",
	(40,0) *++!D{0} *\cir<2pt>{} ="E",
	(43,0) *++!D{0} *\cir<2pt>{} ="EE",
	(48,0) *++!D{0} *\cir<2pt>{} ="F",
	\ar@{-} "AAAA";"AAA"
	\ar@{-} "AAA";"AA"
	\ar@{-} "AA";"A"
	\ar@{-} "A";"A_1"
	\ar@{.} "A_1";"A_2"^*!U{\cdots}
	\ar@{-} "A_2";"B"
	\ar@{-} "B";"C"
	\ar@{-} "C";"DD"
	\ar@{-} "DD";"D"
	\ar@{-} "D";"D_1"
	\ar@{.} "D_1";"D_2"^*!U{\cdots}
	\ar@{-} "D_2";"E"
	\ar@{-} "E";"EE"
	\ar@{<=} "EE";"F"
\end{xy} \\ 	
\raisebox{-0.3cm}{\shortstack{$[(2k)^2]$ \\ $(2k=l)$}}
 & \begin{xy}
	(0,0) *++!D{0} *\cir<2pt>{}        ="AAAA",
	(3,0) *++!D{2} *\cir<2pt>{}        ="AAA",
	(6,0)*++!D{0} *\cir<2pt>{}        ="AA",
	(9,0)*++!D{2} *\cir<2pt>{}        ="A",
	(12,0) = "A_1",
	(17,0) = "A_2",
	(20,0) *++!D{0} *\cir<2pt>{} ="B",
	(23,0) *++!D{2} *\cir<2pt>{} ="C",
	(26,0) *++!D{0} *\cir<2pt>{} ="DD",
	(29,0) *++!D{2} *\cir<2pt>{} ="D",
	(32,0) = "D_1",
	(37,0) = "D_2",
	(40,0) *++!D{2} *\cir<2pt>{} ="E",
	(43,0) *++!D{0} *\cir<2pt>{} ="EE",
	(48,0) *++!D{2} *\cir<2pt>{} ="F",
	\ar@{-} "AAAA";"AAA"
	\ar@{-} "AAA";"AA"
	\ar@{-} "AA";"A"
	\ar@{-} "A";"A_1"
	\ar@{.} "A_1";"A_2"^*!U{\cdots}
	\ar@{-} "A_2";"B"
	\ar@{-} "B";"C"
	\ar@{-} "C";"DD"
	\ar@{-} "DD";"D"
	\ar@{-} "D";"D_1"
	\ar@{.} "D_1";"D_2"^*!U{\cdots}
	\ar@{-} "D_2";"E"
	\ar@{-} "E";"EE"
	\ar@{<=} "EE";"F"
\end{xy} \\ 
\bottomrule
\end{longtable}
\end{center}

Let $\mathfrak{g}$ be a non-compact real form of $\mathfrak{g}_\mathbb{C}$.
Then the Satake diagram $S_\mathfrak{g}$ and $\Psi(\mathfrak{b})$ are given as follows:
\begin{center}
	\begin{longtable}{lll} \toprule
	$\mathfrak{g}$ & $S_\mathfrak{g}$ & $\Psi(\mathfrak{b})$ \\ \midrule

$\mathfrak{sp}(l,\mathbb{R})$ & 
\begin{xy}
    *\cir<2pt>{}        ="A",
	(6,0) *\cir<2pt>{} ="B",
	(15,0) *\cir<2pt>{} ="C",
	(21,0) *\cir<2pt>{} ="D",
	\ar@{-} "A";"B"
	\ar@{-} "B"; (9,0)
	\ar@{.} (9,0) ; (12,0)
	\ar@{-} (12,0) ; "C"
	\ar@{<=} "C";"D"
\end{xy} &
$\left\{
\begin{xy}
    *++!D{b_1} *\cir<2pt>{}        ="A",
	(6,0) *++!D{b_2} *\cir<2pt>{} ="B",
	(15,0) *++!D{b_{l-1}} *\cir<2pt>{} ="C",
	(21,0) *++!D{b_{l}} *\cir<2pt>{} ="D",
	\ar@{-} "A";"B"
	\ar@{-} "B"; (9,0)
	\ar@{.} (9,0) ; (12,0)
	\ar@{-} (12,0) ; "C"
	\ar@{<=} "C";"D"
\end{xy}
\right\}$
\\ \hline
	
\raisebox{-0.5cm}{\shortstack{$\mathfrak{sp}(p,q)$ \\ $(p+q=l)$ \\ $(p>q)$}}& 
\begin{xy}
    *{\bullet}       ="A",
	(4,0) *\cir<2pt>{} ="B",
	(8,0) *{\bullet} ="C",
	(15,0) *+!U{\alpha_{2q}} *\cir<2pt>{} ="D",
	(19,0) *{\bullet} ="E",
	(23,0) *{\bullet} ="E'",
	(30,0) *{\bullet} ="F",	
	(34,0) *{\bullet} ="G",
	\ar@{-} "A";"B"
	\ar@{-} "B";"C"	
	\ar@{-} "C"; (10,0)
	\ar@{.} (10,0) ; (13,0)
	\ar@{-} (13,0) ; "D"
	\ar@{-} "D";"E"
	\ar@{-} "E";"E'"
	\ar@{-} "E'"; (25,0)
	\ar@{.} (25,0) ; (28,0)
	\ar@{-} (28,0) ; "F"
	\ar@{<=} "F";"G"
\end{xy} &
$\left\{
\begin{xy}
        *++!D{0} *\cir<2pt>{}      ="A",
	(4,0) *++!D{b_1} *\cir<2pt>{} ="B",
	(7,0) *++!D{0} *\cir<2pt>{} ="C",
	(16,0) *++!D{b_{q}} *\cir<2pt>{} ="D",
	(19,0) *++!D{0} *\cir<2pt>{} ="E",
	(23,0) *++!D{0} *\cir<2pt>{} ="E'",
	(30,0) *++!D{0} *\cir<2pt>{} ="F",	
	(34,0) *++!D{0} *\cir<2pt>{} ="G",
	\ar@{-} "A";"B"
	\ar@{-} "B";"C"	
	\ar@{-} "C"; (9,0)
	\ar@{.} (9,0) ; (14,0)^*U{\cdots}
	\ar@{-} (14,0) ; "D"
	\ar@{-} "D";"E"
	\ar@{-} "E";"E'"
	\ar@{-} "E'"; (25,0)
	\ar@{.} (25,0) ; (28,0)^*U{\cdots}
	\ar@{-} (28,0) ; "F"
	\ar@{<=} "F";"G"
\end{xy}
\right\}$
\\ \hline

\raisebox{-0.3cm}{\shortstack{$\mathfrak{sp}(k,k)$ \\ $(2k=l)$}}& 
\begin{xy}
	*{\bullet}       ="A",
	(5,0) *\cir<2pt>{} ="B",
	(10,0) *{\bullet} ="C",
	(20,0) *\cir<2pt>{} ="D",
	(25,0) *{\bullet} ="E",	
	(30,0) *+!LU{\alpha_{2k}} *\cir<2pt>{} ="F",
	\ar@{-} "A";"B"
	\ar@{-} "B";"C"	
	\ar@{-} "C"; (13,0)
	\ar@{.} (13,0) ; (17,0)
	\ar@{-} (17,0) ; "D"
	\ar@{-} "D";"E"
	\ar@{<=} "E";"F"
\end{xy} &
$\left\{
\begin{xy}
	*++!D{0} *\cir<2pt>{}       ="A",
	(5,0) *++!D{b_1} *\cir<2pt>{} ="B",
	(9,0) *++!D{0} *\cir<2pt>{} ="C",
	(20,0) *++!D{b_{k-1}} *\cir<2pt>{} ="D",
	(25,0) *++!D{0} *\cir<2pt>{} ="E",	
	(30,0) *++!D{b_k} *\cir<2pt>{} ="F",
	\ar@{-} "A";"B"
	\ar@{-} "B";"C"	
	\ar@{-} "C"; (11,0)
	\ar@{.} (11,0) ; (15,0)^*U{\cdots}
	\ar@{-} (15,0) ; "D"
	\ar@{-} "D";"E"
	\ar@{<=} "E";"F"
\end{xy}
\right\}$ 
\\ 
\bottomrule
	\end{longtable}
\end{center}

Therefore for each $\mathfrak{g}$,
we can find a basis of $\Psi(\mathfrak{b})$ 
by taking some weighted Dynkin diagrams 
in $\Psi(\mathcal{H}^n(\mathfrak{a}_+)) = 
\{ \Psi_A \in \Psi(\mathcal{H}^n(\mathfrak{j}_+)) \mid \text{$\Psi_A$ matches $S_\mathfrak{g}$}\}$ as follows:
\begin{center}
\begin{longtable}{ll} \toprule
$\mathfrak{g}$ & Example of basis of $\Psi(\mathfrak{b})$ \\ \midrule
$\mathfrak{sp}(l,\mathbb{R})$ & $[2^{l}],[4,2^{l-2}], [6,2^{l-3}], \dots, [2l]$ \\
$\mathfrak{sp}(p,q) \ (p+q =l,p>q)$ & $[3^2,1^{2l-6}],[5^2,1^{2l-10}],\dots,[(2q+1)^2,1^{2l-2q-1}]$ \\
$\mathfrak{sp}(k,k)$ $(2k=l)$ & $[3^2,1^{4k-6}],[5^2,1^{4k-10}],\dots,[(2k-1)^2,1^{2}],[(2k)^2]$ \\
\bottomrule
\end{longtable}
\end{center}

\subsection{Type $D_{2m}$}

Let us consider the case where 
$\mathfrak{g}_\mathbb{C}$ is of type $D_{2m}$ for $m \geq 2$,
that is,
$\mathfrak{g}_\mathbb{C} \simeq \mathfrak{so}(4m,\mathbb{C})$.
Then we have 
\[
\Map(\Pi,\mathbb{R})^\iota = \Map(\Pi,\mathbb{R}) =
\left\{ 
\begin{xy}
	*++!D{a_1} *\cir<2pt>{}        ="A",
	(10,0) *++!D{a_2} *\cir<2pt>{} ="B",
	(25,0) *++!D{a_{2m-2}} *\cir<2pt>{} ="C",
	(35,-5) *++!D{a_{2m}} *++!U{} *\cir<2pt>{} ="D",
	(35,5) *++!D{a_{2m-1}} *\cir<2pt>{} ="E",	
	\ar@{-} "A";"B"
	\ar@{-} "B"; (15,0)
	\ar@{.} (15,0) ; (20,0)^*!U{\cdots}
	\ar@{-} (20,0) ; "C"
	\ar@{-} "C";"D"
	\ar@{-} "C";"E"
\end{xy}
\right\}
\]

By \cite[Section 5.3]{Collingwood-McGovern93}, 
we can find some examples of weighted Dynkin diagrams in 
$\Psi(\mathcal{H}^n(\mathfrak{j}_+))$ as follows:
\begin{center}
\begin{longtable}{ll} \toprule
	Symbol & Weighted Dynkin diagram in $\Psi(\mathcal{H}^n(\mathfrak{j}_+))$ \\ \midrule
	
	$[2s+1,1^{4m-2s-1}]$ & \begin{xy}
	*++!D{2} *\cir<2pt>{}        ="A",
	(5,0) *++!D{2} *\cir<2pt>{} ="B",
	(15,0) *++!D{2} *+!U{\alpha_s} *\cir<2pt>{} ="C",
	(20,0) *++!D{0} *\cir<2pt>{} ="D",
	(30,0) *++!D{0} *\cir<2pt>{} ="E",	
	(35,-5) *++!L{0} *\cir<2pt>{} ="F",
	(35,5) *++!L{0} *\cir<2pt>{} ="G",
	\ar@{-} "A";"B"
	\ar@{-} "B"; (8,0)
	\ar@{.} (8,0) ; (12,0)
	\ar@{-} (12,0) ; "C"
	\ar@{-} "C";"D"
	\ar@{-} "D"; (23,0)
	\ar@{.} (23,0) ; (27,0)
	\ar@{-} (27,0) ; "E"
	\ar@{-} "E";"F"
	\ar@{-} "E";"G"	
\end{xy}  \\ 
	$[4m-1,1]$ & \begin{xy}
	*++!D{2} *\cir<2pt>{}        ="A",
	(5,0) *++!D{2} *\cir<2pt>{} ="B",
	(15,0) *++!D{2} *\cir<2pt>{} ="C",
	(20,0) *++!D{2} *\cir<2pt>{} ="D",
	(30,0) *++!D{2} *\cir<2pt>{} ="E",	
	(35,-5) *++!L{2} *\cir<2pt>{} ="F",
	(35,5) *++!L{2} *\cir<2pt>{} ="G",
	\ar@{-} "A";"B"
	\ar@{-} "B"; (8,0)
	\ar@{.} (8,0) ; (12,0)
	\ar@{-} (12,0) ; "C"
	\ar@{-} "C";"D"
	\ar@{-} "D"; (23,0)
	\ar@{.} (23,0) ; (27,0)
	\ar@{-} (27,0) ; "E"
	\ar@{-} "E";"F"
	\ar@{-} "E";"G"	
\end{xy}  \\
$[2^{2m}]_{\text{I}}$ & \begin{xy}
	*++!D{0} *\cir<2pt>{}        ="A",
	(5,0) *++!D{0} *\cir<2pt>{} ="B",
	(15,0) *++!D{0} *\cir<2pt>{} ="C",
	(20,0) *++!D{0} *\cir<2pt>{} ="D",
	(30,0) *++!D{0} *\cir<2pt>{} ="E",	
	(35,-5) *++!L{2} *\cir<2pt>{} ="F",
	(35,5) *++!L{0} *\cir<2pt>{} ="G",
	\ar@{-} "A";"B"
	\ar@{-} "B"; (8,0)
	\ar@{.} (8,0) ; (12,0)
	\ar@{-} (12,0) ; "C"
	\ar@{-} "C";"D"
	\ar@{-} "D"; (23,0)
	\ar@{.} (23,0) ; (27,0)
	\ar@{-} (27,0) ; "E"
	\ar@{-} "E";"F"
	\ar@{-} "E";"G"	
\end{xy}  \\ 
$[(2s+1)^2,1^{4m-4s-2}]$ & \begin{xy}
	(0,0) *++!D{0} *\cir<2pt>{}        ="AAAA",
	(3,0) *++!D{2} *\cir<2pt>{}        ="AAA",
	(6,0)*++!D{0} *\cir<2pt>{}        ="AA",
	(9,0)*++!D{2} *\cir<2pt>{}        ="A",
	(12,0) = "A_1",
	(17,0) = "A_2",
	(20,0) *++!D{0} *\cir<2pt>{} ="B",
	(23,0) *++!D{2} *++!U{\alpha_{2s}} *\cir<2pt>{} ="C",
	(26,0) *++!D{0} *\cir<2pt>{} ="DD",
	(29,0) *++!D{0} *\cir<2pt>{} ="D",
	(32,0) = "D_1",
	(37,0) = "D_2",
	(40,0) *++!D{0} *\cir<2pt>{} ="E",
	(43,5) *++!D{0} *\cir<2pt>{} ="F",
	(43,-5) *++!D{0} *\cir<2pt>{} ="F'",
	\ar@{-} "AAAA";"AAA"
	\ar@{-} "AAA";"AA"
	\ar@{-} "AA";"A"
	\ar@{-} "A";"A_1"
	\ar@{.} "A_1";"A_2"^*!U{\cdots}
	\ar@{-} "A_2";"B"
	\ar@{-} "B";"C"
	\ar@{-} "C";"DD"
	\ar@{-} "DD";"D"
	\ar@{-} "D";"D_1"
	\ar@{.} "D_1";"D_2"^*!U{\cdots}
	\ar@{-} "D_2";"E"
	\ar@{-} "E";"F"
	\ar@{-} "E";"F'"
\end{xy} \\ 
\bottomrule
\end{longtable}
\end{center}

Let $\mathfrak{g}$ be a non-compact real form of $\mathfrak{g}_\mathbb{C}$.
Then the Satake diagram $S_\mathfrak{g}$ and $\Psi(\mathfrak{b})$ are given as follows:

\begin{center}
	\begin{longtable}{lll} \toprule
	$\mathfrak{g}$ & $S_\mathfrak{g}$ & $\Psi(\mathfrak{b})$ \\ \midrule

\raisebox{-0.5cm}{\shortstack{$\mathfrak{so}(p,q)$ \\ $(p+q=4m)$ \\ $(p>q+2)$}}& 
\begin{xy}
	*\cir<2pt>{}        ="A",
	(5,0) *\cir<2pt>{} ="B",
	(15,0) *+!U{\alpha_q} *\cir<2pt>{} ="C",
	(20,0) *{\bullet} ="D",
	(30,0) *{\bullet} ="E",	
	(35,-5) *++!U{} *{\bullet} ="F",
	(35,5) *++!D{} *{\bullet} ="G",
	\ar@{-} "A";"B"
	\ar@{-} "B"; (8,0)
	\ar@{.} (8,0) ; (12,0)
	\ar@{-} (12,0) ; "C"
	\ar@{-} "C";"D"
	\ar@{-} "D"; (23,0)
	\ar@{.} (23,0) ; (27,0)
	\ar@{-} (27,0) ; "E"
	\ar@{-} "E";"F"
	\ar@{-} "E";"G"	
\end{xy} 
&
$\left\{
\begin{xy}
	*++!D{b_1} *\cir<2pt>{}        ="A",
	(5,0) *++!D{b_2} *\cir<2pt>{} ="B",
	(15,0) *++!D{b_q}  *\cir<2pt>{} ="C",
	(20,0) *++!D{0} *\cir<2pt>{} ="D",
	(30,0) *++!D{0} *\cir<2pt>{} ="E",	
	(35,-5) *++!U{0} *\cir<2pt>{} ="F",
	(35,5) *++!D{0} *\cir<2pt>{} ="G",
	\ar@{-} "A";"B"
	\ar@{-} "B"; (8,0)
	\ar@{.} (8,0) ; (12,0)
	\ar@{-} (12,0) ; "C"
	\ar@{-} "C";"D"
	\ar@{-} "D"; (23,0)
	\ar@{.} (23,0) ; (27,0)
	\ar@{-} (27,0) ; "E"
	\ar@{-} "E";"F"
	\ar@{-} "E";"G"	
\end{xy}
\right\}$
\\ \hline

$\mathfrak{so}(2m+1,2m-1)$ & \begin{xy}
	*\cir<2pt>{}       ="A",
	(5,0) *\cir<2pt>{} ="B",
	(10,0) *\cir<2pt>{} ="C",
	(20,0) *\cir<2pt>{} ="D",
	(25,0) *\cir<2pt>{} ="E",
	(30,0) *\cir<2pt>{} ="F",	
	(35,-5) *++!U{} *\cir<2pt>{} ="G",
	(35,5) *++!D{} *\cir<2pt>{} ="H",	
	\ar@{-} "A";"B"
	\ar@{-} "B";"C"	
	\ar@{-} "C"; (13,0)
	\ar@{.} (13,0) ; (17,0)
	\ar@{-} (17,0) ; "D"
	\ar@{-} "D";"E"
	\ar@{-} "E";"F"
	\ar@{-} "F";"G"
	\ar@{-} "F";"H"
	\ar@/_2mm/@{<->}@<-1mm> "G";"H"
\end{xy} &
$\left\{
\begin{xy}
	*++!D{b_1} *\cir<2pt>{}       ="A",
	(8,0) *++!D{b_2} *\cir<2pt>{} ="B",
	(10,0) ="C",
	(14,0) ="C'",
	(18,0) = "C''",
	(22,0) *++!D{b_{2m-3}} *\cir<2pt>{} ="E",
	(30,0) *++!L{b_{2m-2}} *\cir<2pt>{} ="F",	
	(35,-5) *++!U{b_{2m-1}} *\cir<2pt>{} ="G",
	(35,5) *++!D{b_{2m-1}} *\cir<2pt>{} ="H",	
	\ar@{-} "A";"B"
	\ar@{-} "B";"C"	
	\ar@{-} "C";"C'"
	\ar@{.} "C'"; "C''"
	\ar@{-} "C''" ; "E"
	\ar@{-} "E";"F"
	\ar@{-} "F";"G"
	\ar@{-} "F";"H"
\end{xy}
\right\}$ \\ \hline

$\mathfrak{so}(2m,2m)$ & \begin{xy}
	*\cir<2pt>{}       ="A",
	(5,0) *\cir<2pt>{} ="B",
	(10,0) *\cir<2pt>{} ="C",
	(20,0) *\cir<2pt>{} ="D",
	(25,0) *\cir<2pt>{} ="E",
	(30,0) *\cir<2pt>{} ="F",	
	(35,-5) *++!U{} *\cir<2pt>{} ="G",
	(35,5) *++!D{} *\cir<2pt>{} ="H",	
	\ar@{-} "A";"B"
	\ar@{-} "B";"C"	
	\ar@{-} "C"; (13,0)
	\ar@{.} (13,0) ; (17,0)
	\ar@{-} (17,0) ; "D"
	\ar@{-} "D";"E"
	\ar@{-} "E";"F"
	\ar@{-} "F";"G"
	\ar@{-} "F";"H"
\end{xy} &
$\left\{
\begin{xy}
	*++!D{b_1} *\cir<2pt>{}       ="A",
	(8,0) *++!D{b_2} *\cir<2pt>{} ="B",
	(12,0) ="C",
	(16,0) ="C'",
	(22,0) *++!D{b_{2m-3}} *\cir<2pt>{} ="E",
	(30,0) *++!L{b_{2m-2}} *\cir<2pt>{} ="F",	
	(35,-5) *++!U{b_{2m}} *\cir<2pt>{} ="G",
	(35,5) *++!D{b_{2m-1}} *\cir<2pt>{} ="H",	
	\ar@{-} "A";"B"
	\ar@{-} "B";"C"	
	\ar@{.} "C";"C'"^*U{\cdots}
	\ar@{-} "C'"; "E"
	\ar@{-} "E";"F"
	\ar@{-} "F";"G"
	\ar@{-} "F";"H"
\end{xy}
\right\}$ \\ \hline

$\mathfrak{so}^*(4m)$ & \begin{xy}
	*{\bullet}       ="A",
	(5,0) *\cir<2pt>{} ="B",
	(10,0) *{\bullet} ="C",
	(20,0) *\cir<2pt>{} ="D",
	(25,0)  *{\bullet}="E",
	(30,0)  *\cir<2pt>{} ="F",	
	(35,-5) *++!U{} *\cir<2pt>{} ="G",
	(35,5) *++!D{} *{\bullet} ="H",	
	\ar@{-} "A";"B"
	\ar@{-} "B";"C"	
	\ar@{-} "C"; (13,0)
	\ar@{.} (13,0) ; (17,0)
	\ar@{-} (17,0) ; "D"
	\ar@{-} "D";"E"
	\ar@{-} "E";"F"
	\ar@{-} "F";"G"
	\ar@{-} "F";"H"
\end{xy} &
$\left\{
\begin{xy}
	*++!D{0} *\cir<2pt>{}       ="A",
	(5,0) *++!D{b_1} *\cir<2pt>{} ="B",
	(10,0) *++!D{0} *\cir<2pt>{} ="C",
	(21,0) *++!D{b_{m-2}} *\cir<2pt>{} ="D",
	(26,0) *++!D{0} *\cir<2pt>{} ="E",
	(30,0)  *++!L{b_{m-1}} *\cir<2pt>{} ="F",	
	(35,-5) *++!U{b_m} *\cir<2pt>{} ="G",
	(35,5) *++!D{0} *\cir<2pt>{} ="H",	
	\ar@{-} "A";"B"
	\ar@{-} "B";"C"	
	\ar@{-} "C"; (13,0)
	\ar@{.} (13,0) ; (16,0)^*U{\cdots}
	\ar@{-} (16,0) ; "D"
	\ar@{-} "D";"E"
	\ar@{-} "E";"F"
	\ar@{-} "F";"G"
	\ar@{-} "F";"H"
\end{xy}
\right\}$
\\ 
\bottomrule
	\end{longtable}
\end{center}

Therefore for each $\mathfrak{g}$,
we can find a basis of $\Psi(\mathfrak{b})$ 
by taking some weighted Dynkin diagrams 
in $\Psi(\mathcal{H}^n(\mathfrak{a}_+)) = 
\{ \Psi_A \in \Psi(\mathcal{H}^n(\mathfrak{j}_+)) \mid \text{$\Psi_A$ matches $S_\mathfrak{g}$}\}$ as follows:
\begin{center}
\begin{longtable}{ll} \toprule
$\mathfrak{g}$ & Example of basis of $\Psi(\mathfrak{b})$ \\ \midrule
$\mathfrak{so}(p,q) \ (p+q = 4m,p>q+2)$ & $[3,1^{4m-3}],[5,1^{4m-5}],\dots,[2q+1,1^{4m-2q-1}]$\\
$\mathfrak{so}(2m+1,2m-1)$ & $[3,1^{4m-3}],[5,1^{4m-5}],\dots,[4m-1,1]$ \\
$\mathfrak{so}(2m,2m)$ & $[3,1^{4m-1}],[5,1^{4m-3}],\dots,[4m-1,1],[2^{2m}]_{\text{I}}$ \\
$\mathfrak{so}^*(4m)$ & $[3^2,1^{4m-6}],[5^2,1^{4m-10}],\dots,[(2m-1)^2,1^2], [2^{2m}]_{\text{I}}$\\
\bottomrule
\end{longtable}
\end{center}

\subsection{Type $D_{2m+1}$}

Let us consider the case where 
$\mathfrak{g}_\mathbb{C}$ is of type $D_{2m+1}$ for $m \geq 1$,
that is,
$\mathfrak{g}_\mathbb{C} \simeq \mathfrak{so}(4m+2,\mathbb{C})$.
Then we have
\[
\Map(\Pi,\mathbb{R})^\iota 
= \left\{ 
\begin{xy}
	*++!D{a_1} *\cir<2pt>{}        ="A",
	(10,0) *++!D{a_2} *\cir<2pt>{} ="B",
	(25,0) *++!D{a_{2m-1}} *\cir<2pt>{} ="C",
	(35,-5) *++!D{a_{2m+1}} *++!U{} *\cir<2pt>{} ="D",
	(35,5) *++!D{a_{2m}} *\cir<2pt>{} ="E",	
	\ar@{-} "A";"B"
	\ar@{-} "B"; (15,0)
	\ar@{.} (15,0) ; (20,0)^*!U{\cdots}
	\ar@{-} (20,0) ; "C"
	\ar@{-} "C";"D"
	\ar@{-} "C";"E"
\end{xy}
 ~\middle|~
a_{2m} = a_{2m+1}
\right\}
\]

By \cite[Section 5.3]{Collingwood-McGovern93}, 
we can find some examples of weighted Dynkin diagrams in 
$\Psi(\mathcal{H}^n(\mathfrak{j}_+))$ as follows:
\begin{center}
\begin{longtable}{ll} \toprule
	Symbol & Weighted Dynkin diagram in $\Psi(\mathcal{H}^n(\mathfrak{j}_+))$ \\ \midrule
	
	$[2s+1,1^{4m-2s+1}]$ & \begin{xy}
	*++!D{2} *\cir<2pt>{}        ="A",
	(5,0) *++!D{2} *\cir<2pt>{} ="B",
	(15,0) *++!D{2} *+!U{\alpha_s} *\cir<2pt>{} ="C",
	(20,0) *++!D{0} *\cir<2pt>{} ="D",
	(30,0) *++!D{0} *\cir<2pt>{} ="E",	
	(35,-5) *++!L{0} *\cir<2pt>{} ="F",
	(35,5) *++!L{0} *\cir<2pt>{} ="G",
	\ar@{-} "A";"B"
	\ar@{-} "B"; (8,0)
	\ar@{.} (8,0) ; (12,0)
	\ar@{-} (12,0) ; "C"
	\ar@{-} "C";"D"
	\ar@{-} "D"; (23,0)
	\ar@{.} (23,0) ; (27,0)
	\ar@{-} (27,0) ; "E"
	\ar@{-} "E";"F"
	\ar@{-} "E";"G"	
\end{xy}  \\ 
	$[4m+1,1]$ & \begin{xy}
	*++!D{2} *\cir<2pt>{}        ="A",
	(5,0) *++!D{2} *\cir<2pt>{} ="B",
	(15,0) *++!D{2} *\cir<2pt>{} ="C",
	(20,0) *++!D{2} *\cir<2pt>{} ="D",
	(30,0) *++!D{2} *\cir<2pt>{} ="E",	
	(35,-5) *++!L{2} *\cir<2pt>{} ="F",
	(35,5) *++!L{2} *\cir<2pt>{} ="G",
	\ar@{-} "A";"B"
	\ar@{-} "B"; (8,0)
	\ar@{.} (8,0) ; (12,0)
	\ar@{-} (12,0) ; "C"
	\ar@{-} "C";"D"
	\ar@{-} "D"; (23,0)
	\ar@{.} (23,0) ; (27,0)
	\ar@{-} (27,0) ; "E"
	\ar@{-} "E";"F"
	\ar@{-} "E";"G"	
\end{xy}  \\ 
$[(2s+1)^2,1^{4m-4s}]$ & \begin{xy}
	(0,0) *++!D{0} *\cir<2pt>{}        ="AAAA",
	(3,0) *++!D{2} *\cir<2pt>{}        ="AAA",
	(6,0)*++!D{0} *\cir<2pt>{}        ="AA",
	(9,0)*++!D{2} *\cir<2pt>{}        ="A",
	(12,0) = "A_1",
	(17,0) = "A_2",
	(20,0) *++!D{0} *\cir<2pt>{} ="B",
	(23,0) *++!D{2} *++!U{\alpha_{2s}} *\cir<2pt>{} ="C",
	(26,0) *++!D{0} *\cir<2pt>{} ="DD",
	(29,0) *++!D{0} *\cir<2pt>{} ="D",
	(32,0) = "D_1",
	(37,0) = "D_2",
	(40,0) *++!D{0} *\cir<2pt>{} ="E",
	(43,5) *++!D{0} *\cir<2pt>{} ="F",
	(43,-5) *++!D{0} *\cir<2pt>{} ="F'",
	\ar@{-} "AAAA";"AAA"
	\ar@{-} "AAA";"AA"
	\ar@{-} "AA";"A"
	\ar@{-} "A";"A_1"
	\ar@{.} "A_1";"A_2"^*!U{\cdots}
	\ar@{-} "A_2";"B"
	\ar@{-} "B";"C"
	\ar@{-} "C";"DD"
	\ar@{-} "DD";"D"
	\ar@{-} "D";"D_1"
	\ar@{.} "D_1";"D_2"^*!U{\cdots}
	\ar@{-} "D_2";"E"
	\ar@{-} "E";"F"
	\ar@{-} "E";"F'"
\end{xy} \\ 
$[(2m+1)^2]$ & \begin{xy}
	(0,0) *++!D{0} *\cir<2pt>{}        ="AAAA",
	(3,0) *++!D{2} *\cir<2pt>{}        ="AAA",
	(6,0)*++!D{0} *\cir<2pt>{}        ="AA",
	(9,0)*++!D{2} *\cir<2pt>{}        ="A",
	(12,0) = "A_1",
	(17,0) = "A_2",
	(20,0) *++!D{0} *\cir<2pt>{} ="B",
	(23,0) *++!D{2} *\cir<2pt>{} ="C",
	(26,0) *++!D{0} *\cir<2pt>{} ="DD",
	(29,0) *++!D{2} *\cir<2pt>{} ="D",
	(32,0) = "D_1",
	(37,0) = "D_2",
	(40,0) *++!D{0} *\cir<2pt>{} ="E",
	(43,5) *++!D{2} *\cir<2pt>{} ="F",
	(43,-5) *++!D{2} *\cir<2pt>{} ="F'",
	\ar@{-} "AAAA";"AAA"
	\ar@{-} "AAA";"AA"
	\ar@{-} "AA";"A"
	\ar@{-} "A";"A_1"
	\ar@{.} "A_1";"A_2"^*!U{\cdots}
	\ar@{-} "A_2";"B"
	\ar@{-} "B";"C"
	\ar@{-} "C";"DD"
	\ar@{-} "DD";"D"
	\ar@{-} "D";"D_1"
	\ar@{.} "D_1";"D_2"^*!U{\cdots}
	\ar@{-} "D_2";"E"
	\ar@{-} "E";"F"
	\ar@{-} "E";"F'"
\end{xy} \\ 
\bottomrule
\end{longtable}
\end{center}

Let $\mathfrak{g}$ be a non-compact real form of $\mathfrak{g}_\mathbb{C}$.
Then the Satake diagram $S_\mathfrak{g}$ and $\Psi(\mathfrak{b})$ are given as follows:
\begin{center}
	\begin{longtable}{lll} \toprule
	$\mathfrak{g}$ & $S_\mathfrak{g}$ & $\Psi(\mathfrak{b})$ \\ \midrule

\raisebox{-0.5cm}{\shortstack{$\mathfrak{so}(p,q)$ \\ $(p+q=4m+2)$ \\ $(p>q+2)$}}& 
\begin{xy}
	*\cir<2pt>{}        ="A",
	(5,0) *\cir<2pt>{} ="B",
	(15,0) *+!U{\alpha_q} *\cir<2pt>{} ="C",
	(20,0) *{\bullet} ="D",
	(30,0) *{\bullet} ="E",	
	(35,-5) *++!U{} *{\bullet} ="F",
	(35,5) *++!D{} *{\bullet} ="G",
	\ar@{-} "A";"B"
	\ar@{-} "B"; (8,0)
	\ar@{.} (8,0) ; (12,0)
	\ar@{-} (12,0) ; "C"
	\ar@{-} "C";"D"
	\ar@{-} "D"; (23,0)
	\ar@{.} (23,0) ; (27,0)
	\ar@{-} (27,0) ; "E"
	\ar@{-} "E";"F"
	\ar@{-} "E";"G"	
\end{xy} 
&
$\left\{
\begin{xy}
	*++!D{b_1} *\cir<2pt>{}        ="A",
	(5,0) *++!D{b_2} *\cir<2pt>{} ="B",
	(15,0) *++!D{b_q}  *\cir<2pt>{} ="C",
	(20,0) *++!D{0} *\cir<2pt>{} ="D",
	(30,0) *++!D{0} *\cir<2pt>{} ="E",	
	(35,-5) *++!U{0} *\cir<2pt>{} ="F",
	(35,5) *++!D{0} *\cir<2pt>{} ="G",
	\ar@{-} "A";"B"
	\ar@{-} "B"; (8,0)
	\ar@{.} (8,0) ; (12,0)
	\ar@{-} (12,0) ; "C"
	\ar@{-} "C";"D"
	\ar@{-} "D"; (23,0)
	\ar@{.} (23,0) ; (27,0)
	\ar@{-} (27,0) ; "E"
	\ar@{-} "E";"F"
	\ar@{-} "E";"G"	
\end{xy}
\right\}$
\\ \hline

$\mathfrak{so}(2m+2,2m)$ & \begin{xy}
	*\cir<2pt>{}       ="A",
	(5,0) *\cir<2pt>{} ="B",
	(10,0) *\cir<2pt>{} ="C",
	(20,0) *\cir<2pt>{} ="D",
	(25,0) *\cir<2pt>{} ="E",
	(30,0) *\cir<2pt>{} ="F",	
	(35,-5) *++!U{} *\cir<2pt>{} ="G",
	(35,5) *++!D{} *\cir<2pt>{} ="H",	
	\ar@{-} "A";"B"
	\ar@{-} "B";"C"	
	\ar@{-} "C"; (13,0)
	\ar@{.} (13,0) ; (17,0)
	\ar@{-} (17,0) ; "D"
	\ar@{-} "D";"E"
	\ar@{-} "E";"F"
	\ar@{-} "F";"G"
	\ar@{-} "F";"H"
	\ar@/_2mm/@{<->}@<-1mm> "G";"H"
\end{xy} &
$\left\{
\begin{xy}
	*++!D{b_1} *\cir<2pt>{}       ="A",
	(8,0) *++!D{b_2} *\cir<2pt>{} ="B",
	(10,0) ="C",
	(14,0) ="C'",
	(18,0) = "C''",
	(22,0) *++!D{b_{2m-2}} *\cir<2pt>{} ="E",
	(30,0) *++!L{b_{2m-1}} *\cir<2pt>{} ="F",	
	(35,-5) *++!U{b_{2m}} *\cir<2pt>{} ="G",
	(35,5) *++!D{b_{2m}} *\cir<2pt>{} ="H",	
	\ar@{-} "A";"B"
	\ar@{-} "B";"C"	
	\ar@{-} "C";"C'"
	\ar@{.} "C'"; "C''"
	\ar@{-} "C''" ; "E"
	\ar@{-} "E";"F"
	\ar@{-} "F";"G"
	\ar@{-} "F";"H"
\end{xy}
\right\}$ \\ \hline

$\mathfrak{so}(2m+1,2m+1)$ & \begin{xy}
	*\cir<2pt>{}       ="A",
	(5,0) *\cir<2pt>{} ="B",
	(10,0) *\cir<2pt>{} ="C",
	(20,0) *\cir<2pt>{} ="D",
	(25,0) *\cir<2pt>{} ="E",
	(30,0) *\cir<2pt>{} ="F",	
	(35,-5) *++!U{} *\cir<2pt>{} ="G",
	(35,5) *++!D{} *\cir<2pt>{} ="H",	
	\ar@{-} "A";"B"
	\ar@{-} "B";"C"	
	\ar@{-} "C"; (13,0)
	\ar@{.} (13,0) ; (17,0)
	\ar@{-} (17,0) ; "D"
	\ar@{-} "D";"E"
	\ar@{-} "E";"F"
	\ar@{-} "F";"G"
	\ar@{-} "F";"H"
\end{xy} &
$\left\{
\begin{xy}
	*++!D{b_1} *\cir<2pt>{}       ="A",
	(8,0) *++!D{b_2} *\cir<2pt>{} ="B",
	(12,0) ="C",
	(16,0) ="C'",
	(22,0) *++!D{b_{2m-2}} *\cir<2pt>{} ="E",
	(30,0) *++!L{b_{2m-1}} *\cir<2pt>{} ="F",	
	(35,-5) *++!U{b_{2m}} *\cir<2pt>{} ="G",
	(35,5) *++!D{b_{2m}} *\cir<2pt>{} ="H",	
	\ar@{-} "A";"B"
	\ar@{-} "B";"C"	
	\ar@{.} "C";"C'"^*U{\cdots}
	\ar@{-} "C'"; "E"
	\ar@{-} "E";"F"
	\ar@{-} "F";"G"
	\ar@{-} "F";"H"
\end{xy}
\right\}$ \\ \hline

$\mathfrak{so}^*(4m+2)$ & \begin{xy}
	*{\bullet}       ="A",
	(5,0) *\cir<2pt>{} ="B",
	(10,0) *{\bullet} ="C",
	(20,0) *{\bullet} ="D",
	(25,0) *\cir<2pt>{} ="E",
	(30,0)  *{\bullet} ="F",	
	(35,-5) *++!U{} *\cir<2pt>{} ="G",
	(35,5) *++!D{} *\cir<2pt>{} ="H",	
	\ar@{-} "A";"B"
	\ar@{-} "B";"C"	
	\ar@{-} "C"; (13,0)
	\ar@{.} (13,0) ; (17,0)
	\ar@{-} (17,0) ; "D"
	\ar@{-} "D";"E"
	\ar@{-} "E";"F"
	\ar@{-} "F";"G"
	\ar@{-} "F";"H"
	\ar@/_2mm/@{<->}@<-1mm> "G";"H"	
\end{xy} &
$\left\{
\begin{xy}
	*++!D{0} *\cir<2pt>{}       ="A",
	(5,0) *++!D{b_1} *\cir<2pt>{} ="B",
	(10,0) *++!D{0} *\cir<2pt>{} ="C",
	(20,0) *++!D{0} *\cir<2pt>{} ="D",
	(25,0) *++!D{b_{m-1}} *\cir<2pt>{} ="E",
	(30,0)  *++!L{0} *\cir<2pt>{} ="F",	
	(35,-5) *++!U{b_m} *\cir<2pt>{} ="G",
	(35,5) *++!D{b_m} *\cir<2pt>{} ="H",	
	\ar@{-} "A";"B"
	\ar@{-} "B";"C"	
	\ar@{-} "C"; (13,0)
	\ar@{.} (13,0) ; (17,0)^*U{\cdots}
	\ar@{-} (17,0) ; "D"
	\ar@{-} "D";"E"
	\ar@{-} "E";"F"
	\ar@{-} "F";"G"
	\ar@{-} "F";"H"
\end{xy}
\right\}$
\\ 
\bottomrule
	\end{longtable}
\end{center}

Therefore for each $\mathfrak{g}$,
we can find a basis of $\Psi(\mathfrak{b})$ 
by taking some weighted Dynkin diagrams 
in $\Psi(\mathcal{H}^n(\mathfrak{a}_+)) = 
\{ \Psi_A \in \Psi(\mathcal{H}^n(\mathfrak{j}_+)) \mid \text{$\Psi_A$ matches $S_\mathfrak{g}$}\}$ as follows:
\begin{center}
\begin{longtable}{ll} \toprule
$\mathfrak{g}$ & Example of basis \\ \midrule
$\mathfrak{so}(p,q) \ (p+q = 4m+2,p>q+2)$ & $[3,1^{4m-1}],[5,1^{4m-3}],\dots,[2q+1,1^{4m-2q+1}]$\\
$\mathfrak{so}(2m+2,2m)$ & $[3,1^{4m-1}],[5,1^{4m-3}],\dots,[4m+1,1]$\\
$\mathfrak{so}(2m+1,2m+1)$ & $[3,1^{4m-1}],[5,1^{4m-3}],\dots,[4m+1,1]$\\
$\mathfrak{so}^*(4m+2)$ & $[3^2,1^{4m-4}],[5^2,1^{4m-8}],\dots,[(2m+1)^2]$\\
\bottomrule
\end{longtable}
\end{center}

\subsection{Type $E_6$}

Let us consider the case where 
$\mathfrak{g}_\mathbb{C}$ is of type $E_6$,
that is, 
$\mathfrak{g}_\mathbb{C} \simeq \mathfrak{e}_{6,\mathbb{C}}$.
Then we have
\[
\Map(\Pi,\mathbb{R})^\iota 
= \left\{ 
\begin{xy}
	*++!D{a_1} *\cir<2pt>{}        ="A",
	(10,0) *++!D{a_2} *\cir<2pt>{} ="B",
	(20,0) *++!D{a_3} *\cir<2pt>{} ="C",
	(30,0) *++!D{a_4} *\cir<2pt>{} ="D",
	(40,0) *++!D{a_5} *\cir<2pt>{} ="E",
	(20,-10) *++!U{} *++!L{a_6} *\cir<2pt>{} ="F",
	\ar@{-} "A";"B"
	\ar@{-} "B";"C"
	\ar@{-} "C";"D"
	\ar@{-} "D";"E"
	\ar@{-} "C";"F"
\end{xy} ~\middle|~
a_1 = a_5, \ a_2 = a_4
\right\}
\]

In \cite[Section 8.4]{Collingwood-McGovern93}, 
we can find some examples of weighted Dynkin diagrams in 
$\Psi(\mathcal{H}^n(\mathfrak{j}_+))$ as follows:
\begin{center}
\begin{longtable}{ll} \toprule
	Symbol & Weighted Dynkin diagram in $\Psi(\mathcal{H}^n(\mathfrak{j}_+))$ \\ \midrule
	
	$A_2$ & \begin{xy}
	*++!D{0} *\cir<2pt>{}        ="A",
	(10,0) *++!D{0} *\cir<2pt>{} ="B",
	(20,0) *++!D{0} *\cir<2pt>{} ="C",
	(30,0) *++!D{0} *\cir<2pt>{} ="D",
	(40,0) *++!D{0} *\cir<2pt>{} ="E",
	(20,-10) *++!U{} *++!L{2} *\cir<2pt>{} ="F",
	\ar@{-} "A";"B"
	\ar@{-} "B";"C"
	\ar@{-} "C";"D"
	\ar@{-} "D";"E"
	\ar@{-} "C";"F"
\end{xy}
\\ 
	$2A_2$ & \begin{xy}
	*++!D{2} *\cir<2pt>{}        ="A",
	(10,0) *++!D{0} *\cir<2pt>{} ="B",
	(20,0) *++!D{0} *\cir<2pt>{} ="C",
	(30,0) *++!D{0} *\cir<2pt>{} ="D",
	(40,0) *++!D{2} *\cir<2pt>{} ="E",
	(20,-10) *++!U{} *++!L{0} *\cir<2pt>{} ="F",
	\ar@{-} "A";"B"
	\ar@{-} "B";"C"
	\ar@{-} "C";"D"
	\ar@{-} "D";"E"
	\ar@{-} "C";"F"
\end{xy}
\\ 
	$D_4$ & \begin{xy}
	*++!D{0} *\cir<2pt>{}        ="A",
	(10,0) *++!D{0} *\cir<2pt>{} ="B",
	(20,0) *++!D{2} *\cir<2pt>{} ="C",
	(30,0) *++!D{0} *\cir<2pt>{} ="D",
	(40,0) *++!D{0} *\cir<2pt>{} ="E",
	(20,-10) *++!U{} *++!L{2} *\cir<2pt>{} ="F",
	\ar@{-} "A";"B"
	\ar@{-} "B";"C"
	\ar@{-} "C";"D"
	\ar@{-} "D";"E"
	\ar@{-} "C";"F"
\end{xy}
\\ 
	$E_6$ & \begin{xy}
	*++!D{2} *\cir<2pt>{}        ="A",
	(10,0) *++!D{2} *\cir<2pt>{} ="B",
	(20,0) *++!D{2} *\cir<2pt>{} ="C",
	(30,0) *++!D{2} *\cir<2pt>{} ="D",
	(40,0) *++!D{2} *\cir<2pt>{} ="E",
	(20,-10) *++!U{} *++!L{2} *\cir<2pt>{} ="F",
	\ar@{-} "A";"B"
	\ar@{-} "B";"C"
	\ar@{-} "C";"D"
	\ar@{-} "D";"E"
	\ar@{-} "C";"F"
\end{xy}
\\ 
\bottomrule
\end{longtable}
\end{center}

Let $\mathfrak{g}$ be a non-compact real form of $\mathfrak{g}_\mathbb{C}$.
Then the Satake diagram $S_\mathfrak{g}$ and $\Psi(\mathfrak{b})$ are given as follows:
\begin{center}
	\begin{longtable}{lll} \toprule
	$\mathfrak{g}$ & $S_\mathfrak{g}$ & $\Psi(\mathfrak{b})$ \\ \midrule
	
$\mathfrak{e}_{6(6)}$ & \begin{xy}
	*\cir<2pt>{}        ="A",
	(10,0) *\cir<2pt>{} ="B",
	(20,0) *\cir<2pt>{} ="C",
	(30,0) *\cir<2pt>{} ="D",
	(40,0) *\cir<2pt>{} ="E",
	(20,-10) *++!U{} *\cir<2pt>{} ="F",
	\ar@{-} "A";"B"
	\ar@{-} "B";"C"
	\ar@{-} "C";"D"
	\ar@{-} "D";"E"
	\ar@{-} "C";"F"
\end{xy}&
$\left\{ 
\begin{xy}
	*++!D{b_1} *\cir<2pt>{}        ="A",
	(10,0) *++!D{b_2} *\cir<2pt>{} ="B",
	(20,0) *++!D{b_3} *\cir<2pt>{} ="C",
	(30,0) *++!D{b_2} *\cir<2pt>{} ="D",
	(40,0) *++!D{b_1} *\cir<2pt>{} ="E",
	(20,-10) *++!U{} *++!L{b_4} *\cir<2pt>{} ="F",
	\ar@{-} "A";"B"
	\ar@{-} "B";"C"
	\ar@{-} "C";"D"
	\ar@{-} "D";"E"
	\ar@{-} "C";"F"
\end{xy} 
\right\}$
\\ \hline
	
$\mathfrak{e}_{6(2)}$ & \begin{xy}
	*\cir<2pt>{}        ="A",
	(10,0) *\cir<2pt>{} ="B",
	(20,0) *\cir<2pt>{} ="C",
	(30,0) *\cir<2pt>{} ="D",
	(40,0) *\cir<2pt>{} ="E",
	(20,-10) *++!U{} *\cir<2pt>{} ="F",
	\ar@{-} "A";"B"
	\ar@{-} "B";"C"
	\ar@{-} "C";"D"
	\ar@{-} "D";"E"
	\ar@{-} "C";"F"
	
	\ar@(ru,lu) @<1mm> @{<->} "A" ; "E"
	\ar@/^4mm/ @<1mm> @{<->} "B" ; "D"	
\end{xy}
&
$\left\{ 
\begin{xy}
	*++!D{b_1} *\cir<2pt>{}        ="A",
	(10,0) *++!D{b_2} *\cir<2pt>{} ="B",
	(20,0) *++!D{b_3} *\cir<2pt>{} ="C",
	(30,0) *++!D{b_2} *\cir<2pt>{} ="D",
	(40,0) *++!D{b_1} *\cir<2pt>{} ="E",
	(20,-10) *++!U{} *++!L{b_4} *\cir<2pt>{} ="F",
	\ar@{-} "A";"B"
	\ar@{-} "B";"C"
	\ar@{-} "C";"D"
	\ar@{-} "D";"E"
	\ar@{-} "C";"F"
\end{xy} 
\right\}$
\\ \hline
	
$\mathfrak{e}_{6(-14)}$ & \begin{xy}
	*\cir<2pt>{}        ="A",
	(10,0) *{\bullet} ="B",
	(20,0) *{\bullet} ="C",
	(30,0) *{\bullet} ="D",
	(40,0) *\cir<2pt>{} ="E",
	(20,-10) *++!U{} *\cir<2pt>{} ="F",
	\ar@{-} "A";"B"
	\ar@{-} "B";"C"
	\ar@{-} "C";"D"
	\ar@{-} "D";"E"
	\ar@{-} "C";"F"
	
	\ar@(ru,lu) @<1mm> @{<->} "A" ; "E"
\end{xy}
&
$\left\{ 
\begin{xy}
	*++!D{b_1} *\cir<2pt>{}        ="A",
	(10,0) *++!D{0} *\cir<2pt>{} ="B",
	(20,0) *++!D{0} *\cir<2pt>{} ="C",
	(30,0) *++!D{0} *\cir<2pt>{} ="D",
	(40,0) *++!D{b_1} *\cir<2pt>{} ="E",
	(20,-10) *++!U{} *++!L{b_2} *\cir<2pt>{} ="F",
	\ar@{-} "A";"B"
	\ar@{-} "B";"C"
	\ar@{-} "C";"D"
	\ar@{-} "D";"E"
	\ar@{-} "C";"F"
\end{xy} 
\right\}$
\\ \hline	

$\mathfrak{e}_{6(-26)}$ & \begin{xy}
	*\cir<2pt>{}        ="A",
	(10,0) *{\bullet} ="B",
	(20,0) *{\bullet} ="C",
	(30,0) *{\bullet} ="D",
	(40,0) *\cir<2pt>{} ="E",
	(20,-10) *++!U{} *{\bullet} ="F",
	\ar@{-} "A";"B"
	\ar@{-} "B";"C"
	\ar@{-} "C";"D"
	\ar@{-} "D";"E"
	\ar@{-} "C";"F"
\end{xy}
&
$\left\{ 
\begin{xy}
	*++!D{b} *\cir<2pt>{}        ="A",
	(10,0) *++!D{0} *\cir<2pt>{} ="B",
	(20,0) *++!D{0} *\cir<2pt>{} ="C",
	(30,0) *++!D{0} *\cir<2pt>{} ="D",
	(40,0) *++!D{b} *\cir<2pt>{} ="E",
	(20,-10) *++!U{} *++!L{0} *\cir<2pt>{} ="F",
	\ar@{-} "A";"B"
	\ar@{-} "B";"C"
	\ar@{-} "C";"D"
	\ar@{-} "D";"E"
	\ar@{-} "C";"F"
\end{xy} 
\right\}$
\\ \bottomrule
	\end{longtable}
\end{center}

Therefore for each $\mathfrak{g}$,
we can find a basis of $\Psi(\mathfrak{b})$ 
by taking some weighted Dynkin diagrams 
in $\Psi(\mathcal{H}^n(\mathfrak{a}_+)) = 
\{ \Psi_A \in \Psi(\mathcal{H}^n(\mathfrak{j}_+)) \mid \text{$\Psi_A$ matches $S_\mathfrak{g}$}\}$ as follows:

\begin{center}
\begin{longtable}{ll} \toprule
$\mathfrak{g}$ & Example of basis of $\Psi(\mathfrak{b})$ \\ \midrule
$\mathfrak{e}_{6(6)}$ & $A_2, 2A_2, D_4, E_6$ \\ \hline
$\mathfrak{e}_{6(2)}$ & $A_2, 2A_2, D_4, E_6$ \\ \hline
$\mathfrak{e}_{6(-14)}$ & $A_2,2A_2$ \\ \hline
$\mathfrak{e}_{6(-26)}$ & $2A_2$ \\
\bottomrule
\end{longtable}
\end{center}

\subsection{Type $E_7$}

Let us consider the case where 
$\mathfrak{g}_\mathbb{C}$ is of type $E_7$,
that is, 
$\mathfrak{g}_\mathbb{C} \simeq \mathfrak{e}_{7,\mathbb{C}}$.
Then we have
\[
\Map(\Pi,\mathbb{R})^\iota 
= \Map(\Pi,\mathbb{R}) 
= \left\{ 
\begin{xy}
	*++!D{a_1} *\cir<2pt>{}        ="A",
	(10,0) *++!D{a_2} *\cir<2pt>{} ="B",
	(20,0) *++!D{a_3} *\cir<2pt>{} ="C",
	(30,0) *++!D{a_4} *\cir<2pt>{} ="D",
	(40,0) *++!D{a_5} *\cir<2pt>{} ="E",
	(50,0) *++!D{a_6} *\cir<2pt>{} ="F",
	(30,-10) *++!U{} *++!L{a_7} *\cir<2pt>{} ="G",
	\ar@{-} "A";"B"
	\ar@{-} "B";"C"
	\ar@{-} "C";"D"
	\ar@{-} "D";"E"
	\ar@{-} "E";"F"
	\ar@{-} "D";"G"
\end{xy}
\right\}
\]

In \cite[Section 8.4]{Collingwood-McGovern93}, 
we can find some examples of weighted Dynkin diagrams in 
$\Psi(\mathcal{H}^n(\mathfrak{j}_+))$ as follows:
\begin{center}
\begin{longtable}{ll} \toprule
	Symbol & Weighted Dynkin diagram in $\Psi(\mathcal{H}^n(\mathfrak{j}_+))$ \\ \midrule
	
	$(3A_1)''$ & \begin{xy}
	*++!D{2} *\cir<2pt>{}        ="A",
	(10,0) *++!D{0} *\cir<2pt>{} ="B",
	(20,0) *++!D{0} *\cir<2pt>{} ="C",
	(30,0) *++!D{0} *\cir<2pt>{} ="D",
	(40,0) *++!D{0} *\cir<2pt>{} ="E",
	(50,0) *++!D{0} *\cir<2pt>{} ="F",
	(30,-10) *++!U{} *++!L{0} *\cir<2pt>{} ="G",
	\ar@{-} "A";"B"
	\ar@{-} "B";"C"
	\ar@{-} "C";"D"
	\ar@{-} "D";"E"
	\ar@{-} "E";"F"
	\ar@{-} "D";"G"
\end{xy}
\\ 
	$A_2$ & \begin{xy}
	*++!D{0} *\cir<2pt>{}        ="A",
	(10,0) *++!D{0} *\cir<2pt>{} ="B",
	(20,0) *++!D{0} *\cir<2pt>{} ="C",
	(30,0) *++!D{0} *\cir<2pt>{} ="D",
	(40,0) *++!D{0} *\cir<2pt>{} ="E",
	(50,0) *++!D{2} *\cir<2pt>{} ="F",
	(30,-10) *++!U{} *++!L{0} *\cir<2pt>{} ="G",
	\ar@{-} "A";"B"
	\ar@{-} "B";"C"
	\ar@{-} "C";"D"
	\ar@{-} "D";"E"
	\ar@{-} "E";"F"
	\ar@{-} "D";"G"
\end{xy}
\\ 
	$2A_2$ & \begin{xy}
	*++!D{0} *\cir<2pt>{}        ="A",
	(10,0) *++!D{2} *\cir<2pt>{} ="B",
	(20,0) *++!D{0} *\cir<2pt>{} ="C",
	(30,0) *++!D{0} *\cir<2pt>{} ="D",
	(40,0) *++!D{0} *\cir<2pt>{} ="E",
	(50,0) *++!D{0} *\cir<2pt>{} ="F",
	(30,-10) *++!U{} *++!L{0} *\cir<2pt>{} ="G",
	\ar@{-} "A";"B"
	\ar@{-} "B";"C"
	\ar@{-} "C";"D"
	\ar@{-} "D";"E"
	\ar@{-} "E";"F"
	\ar@{-} "D";"G"
\end{xy}
\\ 
	$D_4$ & \begin{xy}
	*++!D{0} *\cir<2pt>{}        ="A",
	(10,0) *++!D{0} *\cir<2pt>{} ="B",
	(20,0) *++!D{0} *\cir<2pt>{} ="C",
	(30,0) *++!D{0} *\cir<2pt>{} ="D",
	(40,0) *++!D{2} *\cir<2pt>{} ="E",
	(50,0) *++!D{2} *\cir<2pt>{} ="F",
	(30,-10) *++!U{} *++!L{0} *\cir<2pt>{} ="G",
	\ar@{-} "A";"B"
	\ar@{-} "B";"C"
	\ar@{-} "C";"D"
	\ar@{-} "D";"E"
	\ar@{-} "E";"F"
	\ar@{-} "D";"G"
\end{xy}
\\ 
	$A_3+A_2+A_1$ & \begin{xy}
	*++!D{0} *\cir<2pt>{}        ="A",
	(10,0) *++!D{0} *\cir<2pt>{} ="B",
	(20,0) *++!D{2} *\cir<2pt>{} ="C",
	(30,0) *++!D{0} *\cir<2pt>{} ="D",
	(40,0) *++!D{0} *\cir<2pt>{} ="E",
	(50,0) *++!D{0} *\cir<2pt>{} ="F",
	(30,-10) *++!U{} *++!L{0} *\cir<2pt>{} ="G",
	\ar@{-} "A";"B"
	\ar@{-} "B";"C"
	\ar@{-} "C";"D"
	\ar@{-} "D";"E"
	\ar@{-} "E";"F"
	\ar@{-} "D";"G"
\end{xy}
\\ 
	$A_4+A_2$ & \begin{xy}
	*++!D{0} *\cir<2pt>{}        ="A",
	(10,0) *++!D{0} *\cir<2pt>{} ="B",
	(20,0) *++!D{0} *\cir<2pt>{} ="C",
	(30,0) *++!D{2} *\cir<2pt>{} ="D",
	(40,0) *++!D{0} *\cir<2pt>{} ="E",
	(50,0) *++!D{0} *\cir<2pt>{} ="F",
	(30,-10) *++!U{} *++!L{0} *\cir<2pt>{} ="G",
	\ar@{-} "A";"B"
	\ar@{-} "B";"C"
	\ar@{-} "C";"D"
	\ar@{-} "D";"E"
	\ar@{-} "E";"F"
	\ar@{-} "D";"G"
\end{xy}
\\ 
	$E_7$ & \begin{xy}
	*++!D{2} *\cir<2pt>{}        ="A",
	(10,0) *++!D{2} *\cir<2pt>{} ="B",
	(20,0) *++!D{2} *\cir<2pt>{} ="C",
	(30,0) *++!D{2} *\cir<2pt>{} ="D",
	(40,0) *++!D{2} *\cir<2pt>{} ="E",
	(50,0) *++!D{2} *\cir<2pt>{} ="F",
	(30,-10) *++!U{} *++!L{2} *\cir<2pt>{} ="G",
	\ar@{-} "A";"B"
	\ar@{-} "B";"C"
	\ar@{-} "C";"D"
	\ar@{-} "D";"E"
	\ar@{-} "E";"F"
	\ar@{-} "D";"G"
\end{xy}
\\ 
\bottomrule
\end{longtable}
\end{center}

Let $\mathfrak{g}$ be a non-compact real form of $\mathfrak{g}_\mathbb{C}$.
Then the Satake diagram $S_\mathfrak{g}$ and $\Psi(\mathfrak{b})$ are given as follows:
\begin{center}
	\begin{longtable}{lll} \toprule
	$\mathfrak{g}$ & $S_\mathfrak{g}$ & $\Psi(\mathfrak{b})$ \\ \midrule
	
$\mathfrak{e}_{7(7)}$ & \begin{xy}
	*\cir<2pt>{}        ="A",
	(8,0) *\cir<2pt>{} ="B",
	(16,0) *\cir<2pt>{} ="C",
	(24,0) *\cir<2pt>{} ="D",
	(32,0) *\cir<2pt>{} ="E",
	(40,0) *\cir<2pt>{} ="F",
	(24,-8) *++!U{} *\cir<2pt>{} ="G",
	\ar@{-} "A";"B"
	\ar@{-} "B";"C"
	\ar@{-} "C";"D"
	\ar@{-} "D";"E"
	\ar@{-} "E";"F"
	\ar@{-} "D";"G"
\end{xy}
& $\left\{ 
\begin{xy}
	*++!D{b_1} *\cir<2pt>{}        ="A",
	(8,0) *++!D{b_2} *\cir<2pt>{} ="B",
	(16,0) *++!D{b_3} *\cir<2pt>{} ="C",
	(24,0) *++!D{b_4} *\cir<2pt>{} ="D",
	(32,0) *++!D{b_5} *\cir<2pt>{} ="E",
	(40,0) *++!D{b_6} *\cir<2pt>{} ="F",
	(24,-8) *++!U{} *++!L{b_7} *\cir<2pt>{} ="G",
	\ar@{-} "A";"B"
	\ar@{-} "B";"C"
	\ar@{-} "C";"D"
	\ar@{-} "D";"E"
	\ar@{-} "E";"F"
	\ar@{-} "D";"G"
\end{xy}
\right\}$
 \\ \hline
	
$\mathfrak{e}_{7(-5)}$ & \begin{xy}
	*{\bullet}        ="A",
	(8,0) *\cir<2pt>{} ="B",
	(16,0) *{\bullet} ="C",
	(24,0) *\cir<2pt>{} ="D",
	(32,0) *\cir<2pt>{} ="E",
	(40,0) *\cir<2pt>{} ="F",
	(24,-8) *++!U{} *{\bullet} ="G",
	\ar@{-} "A";"B"
	\ar@{-} "B";"C"
	\ar@{-} "C";"D"
	\ar@{-} "D";"E"
	\ar@{-} "E";"F"
	\ar@{-} "D";"G"
\end{xy} & $\left\{ 
\begin{xy}
	*++!D{0} *\cir<2pt>{}        ="A",
	(8,0) *++!D{b_1} *\cir<2pt>{} ="B",
	(16,0) *++!D{0} *\cir<2pt>{} ="C",
	(24,0) *++!D{b_2} *\cir<2pt>{} ="D",
	(32,0) *++!D{b_3} *\cir<2pt>{} ="E",
	(40,0) *++!D{b_4} *\cir<2pt>{} ="F",
	(24,-8) *++!U{} *++!L{0} *\cir<2pt>{} ="G",
	\ar@{-} "A";"B"
	\ar@{-} "B";"C"
	\ar@{-} "C";"D"
	\ar@{-} "D";"E"
	\ar@{-} "E";"F"
	\ar@{-} "D";"G"
\end{xy}
\right\}$
\\ \hline
	
$\mathfrak{e}_{7(-25)}$ & \begin{xy}
	*\cir<2pt>{}        ="A",
	(8,0) *\cir<2pt>{} ="B",
	(16,0) *{\bullet} ="C",
	(24,0) *{\bullet} ="D",
	(32,0) *{\bullet} ="E",
	(40,0) *\cir<2pt>{} ="F",
	(24,-8) *++!U{} *{\bullet} ="G",
	\ar@{-} "A";"B"
	\ar@{-} "B";"C"
	\ar@{-} "C";"D"
	\ar@{-} "D";"E"
	\ar@{-} "E";"F"
	\ar@{-} "D";"G"
\end{xy} & $\left\{ 
\begin{xy}
	*++!D{b_1} *\cir<2pt>{}        ="A",
	(8,0) *++!D{b_2} *\cir<2pt>{} ="B",
	(16,0) *++!D{0} *\cir<2pt>{} ="C",
	(24,0) *++!D{0} *\cir<2pt>{} ="D",
	(32,0) *++!D{0} *\cir<2pt>{} ="E",
	(40,0) *++!D{b_3} *\cir<2pt>{} ="F",
	(24,-8) *++!U{} *++!L{0} *\cir<2pt>{} ="G",
	\ar@{-} "A";"B"
	\ar@{-} "B";"C"
	\ar@{-} "C";"D"
	\ar@{-} "D";"E"
	\ar@{-} "E";"F"
	\ar@{-} "D";"G"
\end{xy}
\right\}$
\\
\bottomrule
	\end{longtable}
\end{center}

Therefore for each $\mathfrak{g}$,
we can find a basis of $\Psi(\mathfrak{b})$ 
by taking some weighted Dynkin diagrams 
in $\Psi(\mathcal{H}^n(\mathfrak{a}_+)) = 
\{ \Psi_A \in \Psi(\mathcal{H}^n(\mathfrak{j}_+)) \mid \text{$\Psi_A$ matches $S_\mathfrak{g}$}\}$ as follows:

\begin{center}
\begin{longtable}{ll} \toprule
$\mathfrak{g}$ & Example of basis of $\Psi(\mathfrak{b})$ \\ \midrule
$\mathfrak{e}_{7(7)}$ & $3A_1'',A_2,2A_2,D_4,A_3+A_2+A_1,A_4+A_2,E_7$\\ \hline
$\mathfrak{e}_{7(-5)}$ & $A_2,2A_2,D_4,A_4+A_2$ \\ \hline
$\mathfrak{e}_{7(-25)}$ & $3A_1'',A_2,2A_2$ \\
\bottomrule
\end{longtable}
\end{center}

\subsection{Type $E_8$}

Let us consider the case where 
$\mathfrak{g}_\mathbb{C}$ is of type $E_8$,
that is, 
$\mathfrak{g}_\mathbb{C} \simeq \mathfrak{e}_{8,\mathbb{C}}$.
Then we have
\[
\Map(\Pi,\mathbb{R})^\iota 
= \Map(\Pi,\mathbb{R})
= \left\{ 
\begin{xy}
	*++!D{a_1} *\cir<2pt>{}        ="A",
	(10,0) *++!D{a_2} *\cir<2pt>{} ="B",
	(20,0) *++!D{a_3} *\cir<2pt>{} ="C",
	(30,0) *++!D{a_4} *\cir<2pt>{} ="D",
	(40,0) *++!D{a_5} *\cir<2pt>{} ="E",
	(50,0) *++!D{a_6} *\cir<2pt>{} ="F",
	(60,0) *++!D{a_7} *\cir<2pt>{} ="G",
	(40,-10) *++!U{} *++!L{a_8} *\cir<2pt>{} ="H",
	\ar@{-} "A";"B"
	\ar@{-} "B";"C"
	\ar@{-} "C";"D"
	\ar@{-} "D";"E"
	\ar@{-} "E";"F"
	\ar@{-} "F";"G"
	\ar@{-} "E";"H"
\end{xy}
\right\}
\]

In \cite[Section 8.4]{Collingwood-McGovern93}, 
we can find some examples of weighted Dynkin diagrams in 
$\Psi(\mathcal{H}^n(\mathfrak{j}_+))$ as follows:
\begin{center}
\begin{longtable}{ll} \toprule
	Symbol & Weighted Dynkin diagram in $\Psi(\mathcal{H}^n(\mathfrak{j}_+))$ \\ \midrule
	$A_2$ & \begin{xy}
	*++!D{2} *\cir<2pt>{}        ="A",
	(10,0) *++!D{0} *\cir<2pt>{} ="B",
	(20,0) *++!D{0} *\cir<2pt>{} ="C",
	(30,0) *++!D{0} *\cir<2pt>{} ="D",
	(40,0) *++!D{0} *\cir<2pt>{} ="E",
	(50,0) *++!D{0} *\cir<2pt>{} ="F",
	(60,0) *++!D{0} *\cir<2pt>{} ="G",
	(40,-10) *++!U{} *++!L{0} *\cir<2pt>{} ="H",
	\ar@{-} "A";"B"
	\ar@{-} "B";"C"
	\ar@{-} "C";"D"
	\ar@{-} "D";"E"
	\ar@{-} "E";"F"
	\ar@{-} "F";"G"
	\ar@{-} "E";"H"
\end{xy}
\\ 
	$2A_2$ & \begin{xy}
	*++!D{0} *\cir<2pt>{}        ="A",
	(10,0) *++!D{0} *\cir<2pt>{} ="B",
	(20,0) *++!D{0} *\cir<2pt>{} ="C",
	(30,0) *++!D{0} *\cir<2pt>{} ="D",
	(40,0) *++!D{0} *\cir<2pt>{} ="E",
	(50,0) *++!D{0} *\cir<2pt>{} ="F",
	(60,0) *++!D{2} *\cir<2pt>{} ="G",
	(40,-10) *++!U{} *++!L{0} *\cir<2pt>{} ="H",
	\ar@{-} "A";"B"
	\ar@{-} "B";"C"
	\ar@{-} "C";"D"
	\ar@{-} "D";"E"
	\ar@{-} "E";"F"
	\ar@{-} "F";"G"
	\ar@{-} "E";"H"
\end{xy}
\\ 
	$D_4$ & \begin{xy}
	*++!D{2} *\cir<2pt>{}        ="A",
	(10,0) *++!D{2} *\cir<2pt>{} ="B",
	(20,0) *++!D{0} *\cir<2pt>{} ="C",
	(30,0) *++!D{0} *\cir<2pt>{} ="D",
	(40,0) *++!D{0} *\cir<2pt>{} ="E",
	(50,0) *++!D{0} *\cir<2pt>{} ="F",
	(60,0) *++!D{0} *\cir<2pt>{} ="G",
	(40,-10) *++!U{} *++!L{0} *\cir<2pt>{} ="H",
	\ar@{-} "A";"B"
	\ar@{-} "B";"C"
	\ar@{-} "C";"D"
	\ar@{-} "D";"E"
	\ar@{-} "E";"F"
	\ar@{-} "F";"G"
	\ar@{-} "E";"H"
\end{xy}
\\ 
	$A_4 + A_2$ & \begin{xy}
	*++!D{0} *\cir<2pt>{}        ="A",
	(10,0) *++!D{0} *\cir<2pt>{} ="B",
	(20,0) *++!D{2} *\cir<2pt>{} ="C",
	(30,0) *++!D{0} *\cir<2pt>{} ="D",
	(40,0) *++!D{0} *\cir<2pt>{} ="E",
	(50,0) *++!D{0} *\cir<2pt>{} ="F",
	(60,0) *++!D{0} *\cir<2pt>{} ="G",
	(40,-10) *++!U{} *++!L{0} *\cir<2pt>{} ="H",
	\ar@{-} "A";"B"
	\ar@{-} "B";"C"
	\ar@{-} "C";"D"
	\ar@{-} "D";"E"
	\ar@{-} "E";"F"
	\ar@{-} "F";"G"
	\ar@{-} "E";"H"
\end{xy}
\\ 
	$D_4+A_2$ & \begin{xy}
	*++!D{2} *\cir<2pt>{}        ="A",
	(10,0) *++!D{0} *\cir<2pt>{} ="B",
	(20,0) *++!D{0} *\cir<2pt>{} ="C",
	(30,0) *++!D{0} *\cir<2pt>{} ="D",
	(40,0) *++!D{0} *\cir<2pt>{} ="E",
	(50,0) *++!D{0} *\cir<2pt>{} ="F",
	(60,0) *++!D{0} *\cir<2pt>{} ="G",
	(40,-10) *++!U{2} *++!L{} *\cir<2pt>{} ="H",
	\ar@{-} "A";"B"
	\ar@{-} "B";"C"
	\ar@{-} "C";"D"
	\ar@{-} "D";"E"
	\ar@{-} "E";"F"
	\ar@{-} "F";"G"
	\ar@{-} "E";"H"
\end{xy}
\\ 
	$D_5+A_2$ & \begin{xy}
	*++!D{2} *\cir<2pt>{}        ="A",
	(10,0) *++!D{0} *\cir<2pt>{} ="B",
	(20,0) *++!D{0} *\cir<2pt>{} ="C",
	(30,0) *++!D{2} *\cir<2pt>{} ="D",
	(40,0) *++!D{0} *\cir<2pt>{} ="E",
	(50,0) *++!D{0} *\cir<2pt>{} ="F",
	(60,0) *++!D{0} *\cir<2pt>{} ="G",
	(40,-10) *++!U{} *++!L{0} *\cir<2pt>{} ="H",
	\ar@{-} "A";"B"
	\ar@{-} "B";"C"
	\ar@{-} "C";"D"
	\ar@{-} "D";"E"
	\ar@{-} "E";"F"
	\ar@{-} "F";"G"
	\ar@{-} "E";"H"
\end{xy}
\\ 
	$E_8(a_1)$ & \begin{xy}
	*++!D{2} *\cir<2pt>{}        ="A",
	(10,0) *++!D{2} *\cir<2pt>{} ="B",
	(20,0) *++!D{2} *\cir<2pt>{} ="C",
	(30,0) *++!D{2} *\cir<2pt>{} ="D",
	(40,0) *++!D{0} *\cir<2pt>{} ="E",
	(50,0) *++!D{2} *\cir<2pt>{} ="F",
	(60,0) *++!D{2} *\cir<2pt>{} ="G",
	(40,-10) *++!U{} *++!L{2} *\cir<2pt>{} ="H",
	\ar@{-} "A";"B"
	\ar@{-} "B";"C"
	\ar@{-} "C";"D"
	\ar@{-} "D";"E"
	\ar@{-} "E";"F"
	\ar@{-} "F";"G"
	\ar@{-} "E";"H"
\end{xy}
\\ 
	$E_8$ & \begin{xy}
	*++!D{2} *\cir<2pt>{}        ="A",
	(10,0) *++!D{2} *\cir<2pt>{} ="B",
	(20,0) *++!D{2} *\cir<2pt>{} ="C",
	(30,0) *++!D{2} *\cir<2pt>{} ="D",
	(40,0) *++!D{2} *\cir<2pt>{} ="E",
	(50,0) *++!D{2} *\cir<2pt>{} ="F",
	(60,0) *++!D{2} *\cir<2pt>{} ="G",
	(40,-10) *++!U{} *++!L{2} *\cir<2pt>{} ="H",
	\ar@{-} "A";"B"
	\ar@{-} "B";"C"
	\ar@{-} "C";"D"
	\ar@{-} "D";"E"
	\ar@{-} "E";"F"
	\ar@{-} "F";"G"
	\ar@{-} "E";"H"
\end{xy}
\\ 
\bottomrule
\end{longtable}
\end{center}

Let $\mathfrak{g}$ be a non-compact real form of $\mathfrak{g}_\mathbb{C}$.
Then the Satake diagram $S_\mathfrak{g}$ and $\Psi(\mathfrak{b})$ are given as follows:
\begin{center}
	\begin{longtable}{lll} \toprule
	$\mathfrak{g}$ & $S_\mathfrak{g}$ & $\Psi(\mathfrak{b})$ \\ \midrule
	
$\mathfrak{e}_{8(8)}$ & \begin{xy}
	*\cir<2pt>{}        ="A",
	(6,0) *\cir<2pt>{} ="B",
	(12,0) *\cir<2pt>{} ="C",
	(18,0) *\cir<2pt>{} ="D",
	(24,0) *\cir<2pt>{} ="E",
	(30,0) *\cir<2pt>{} ="F",
	(36,0) *\cir<2pt>{} ="G",
	(24,-6) *++!U{} *\cir<2pt>{} ="H",
	\ar@{-} "A";"B"
	\ar@{-} "B";"C"
	\ar@{-} "C";"D"
	\ar@{-} "D";"E"
	\ar@{-} "E";"F"
	\ar@{-} "F";"G"
	\ar@{-} "E";"H"
\end{xy} 
& $\left\{ 
\begin{xy}
	*++!D{b_1} *\cir<2pt>{}        ="A",
	(6,0) *++!D{b_2} *\cir<2pt>{} ="B",
	(12,0) *++!D{b_3} *\cir<2pt>{} ="C",
	(18,0) *++!D{b_4} *\cir<2pt>{} ="D",
	(24,0) *++!D{b_5} *\cir<2pt>{} ="E",
	(30,0) *++!D{b_6} *\cir<2pt>{} ="F",
	(36,0) *++!D{b_7} *\cir<2pt>{} ="G",
	(24,-6) *++!U{} *++!L{b_8} *\cir<2pt>{} ="H",
	\ar@{-} "A";"B"
	\ar@{-} "B";"C"
	\ar@{-} "C";"D"
	\ar@{-} "D";"E"
	\ar@{-} "E";"F"
	\ar@{-} "F";"G"
	\ar@{-} "E";"H"
\end{xy}
\right\}$
\\ \hline
	
$\mathfrak{e}_{8(-24)}$ & \begin{xy}
	*\cir<2pt>{}        ="A",
	(6,0) *\cir<2pt>{} ="B",
	(12,0) *\cir<2pt>{} ="C",
	(18,0) *{\bullet} ="D",
	(24,0) *{\bullet} ="E",
	(30,0) *{\bullet} ="F",
	(36,0) *\cir<2pt>{} ="G",
	(24,-6) *++!U{} *{\bullet} ="H",
	\ar@{-} "A";"B"
	\ar@{-} "B";"C"
	\ar@{-} "C";"D"
	\ar@{-} "D";"E"
	\ar@{-} "E";"F"
	\ar@{-} "F";"G"
	\ar@{-} "E";"H"
\end{xy}
& $\left\{ 
\begin{xy}
	*++!D{b_1} *\cir<2pt>{}        ="A",
	(6,0) *++!D{b_2} *\cir<2pt>{} ="B",
	(12,0) *++!D{b_3} *\cir<2pt>{} ="C",
	(18,0) *++!D{0} *\cir<2pt>{} ="D",
	(24,0) *++!D{0} *\cir<2pt>{} ="E",
	(30,0) *++!D{0} *\cir<2pt>{} ="F",
	(36,0) *++!D{b_4} *\cir<2pt>{} ="G",
	(24,-6) *++!U{} *++!L{0} *\cir<2pt>{} ="H",
	\ar@{-} "A";"B"
	\ar@{-} "B";"C"
	\ar@{-} "C";"D"
	\ar@{-} "D";"E"
	\ar@{-} "E";"F"
	\ar@{-} "F";"G"
	\ar@{-} "E";"H"
\end{xy}
\right\}$
\\ 
\bottomrule
	\end{longtable}
\end{center}

Therefore for each $\mathfrak{g}$,
we can find a basis of $\Psi(\mathfrak{b})$ 
by taking some weighted Dynkin diagrams 
in $\Psi(\mathcal{H}^n(\mathfrak{a}_+)) = 
\{ \Psi_A \in \Psi(\mathcal{H}^n(\mathfrak{j}_+)) \mid \text{$\Psi_A$ matches $S_\mathfrak{g}$}\}$ as follows:

\begin{center}
\begin{longtable}{ll} \toprule
$\mathfrak{g}$ & Example of basis of $\Psi(\mathfrak{b})$ \\ \midrule
$\mathfrak{e}_{8(8)}$ & $A_2,2A_2,D_4,A_4+A_2,D_4+A_2,D_5+A_2,E_8(a_1),E_8$ \\ \hline
$\mathfrak{e}_{8(-24)}$ & $A_2,2A_2,D_4,A_4+A_2$ \\
\bottomrule
\end{longtable}
\end{center}

\subsection{Type $F_4$}

Let us consider the case where 
$\mathfrak{g}_\mathbb{C}$ is of type $F_4$,
that is, 
$\mathfrak{g}_\mathbb{C} \simeq \mathfrak{f}_{4,\mathbb{C}}$.
Then we have
\[
\Map(\Pi,\mathbb{R})^\iota = \Map(\Pi,\mathbb{R}) 
= \left\{ \begin{xy}
	*++!D{a_1} *\cir<2pt>{}        ="A",
	(10,0) *++!D{a_2} *\cir<2pt>{} ="B",
	(20,0) *++!D{a_3} *\cir<2pt>{} ="C",
	(30,0) *++!D{a_4} *\cir<2pt>{} ="D",
	\ar@{-} "A";"B"
	\ar@{=>} "B";"C"
	\ar@{-} "C";"D"
\end{xy} 
\right\}
\]

In \cite[Section 8.4]{Collingwood-McGovern93}, 
we can find some examples of weighted Dynkin diagrams in 
$\Psi(\mathcal{H}^n(\mathfrak{j}_+))$ as follows:

\begin{center}
\begin{longtable}{ll} \toprule
	Symbol & Weighted Dynkin diagram in $\Psi(\mathcal{H}^n(\mathfrak{j}_+))$ \\ \midrule
	$A_2$ & \begin{xy}
	*++!D{2} *\cir<2pt>{}        ="A",
	(10,0) *++!D{0} *\cir<2pt>{} ="B",
	(20,0) *++!D{0} *\cir<2pt>{} ="C",
	(30,0) *++!D{0} *\cir<2pt>{} ="D",
	\ar@{-} "A";"B"
	\ar@{=>} "B";"C"
	\ar@{-} "C";"D"
\end{xy} \\
	$\tilde{A_2}$ & \begin{xy}
	*++!D{0} *\cir<2pt>{}        ="A",
	(10,0) *++!D{0} *\cir<2pt>{} ="B",
	(20,0) *++!D{0} *\cir<2pt>{} ="C",
	(30,0) *++!D{2} *\cir<2pt>{} ="D",
	\ar@{-} "A";"B"
	\ar@{=>} "B";"C"
	\ar@{-} "C";"D"
\end{xy} \\
	$B_3$ & \begin{xy}
	*++!D{2} *\cir<2pt>{}        ="A",
	(10,0) *++!D{2} *\cir<2pt>{} ="B",
	(20,0) *++!D{0} *\cir<2pt>{} ="C",
	(30,0) *++!D{0} *\cir<2pt>{} ="D",
	\ar@{-} "A";"B"
	\ar@{=>} "B";"C"
	\ar@{-} "C";"D"
\end{xy} \\
	$F_4$ & \begin{xy}
	*++!D{2} *\cir<2pt>{}        ="A",
	(10,0) *++!D{2} *\cir<2pt>{} ="B",
	(20,0) *++!D{2} *\cir<2pt>{} ="C",
	(30,0) *++!D{2} *\cir<2pt>{} ="D",
	\ar@{-} "A";"B"
	\ar@{=>} "B";"C"
	\ar@{-} "C";"D"
\end{xy} \\
\bottomrule
\end{longtable}
\end{center}

Let $\mathfrak{g}$ be a non-compact real form of $\mathfrak{g}_\mathbb{C}$.
Then the Satake diagram $S_\mathfrak{g}$ and $\Psi(\mathfrak{b})$ are given as follows:

\begin{center}
	\begin{longtable}{lll} \toprule
	$\mathfrak{g}$ & $S_\mathfrak{g}$ & $\Psi(\mathfrak{b})$ \\ \midrule	
$\mathfrak{f}_{4(4)}$ & \begin{xy}
	*\cir<2pt>{}        ="A",
	(10,0) *\cir<2pt>{} ="B",
	(20,0) *\cir<2pt>{} ="C",
	(30,0) *\cir<2pt>{} ="D",
	\ar@{-} "A";"B"
	\ar@{=>} "B";"C"
	\ar@{-} "C";"D"
\end{xy} &
$\left\{ \begin{xy}
	*++!D{b_1} *\cir<2pt>{}        ="A",
	(10,0) *++!D{b_2} *\cir<2pt>{} ="B",
	(20,0) *++!D{b_3} *\cir<2pt>{} ="C",
	(30,0) *++!D{b_4} *\cir<2pt>{} ="D",
	\ar@{-} "A";"B"
	\ar@{=>} "B";"C"
	\ar@{-} "C";"D"
\end{xy} 
\right\}$
\\ \hline
	
$\mathfrak{f}_{4(-20)}$ & \begin{xy}
	*{\bullet}        ="A",
	(10,0) *{\bullet} ="B",
	(20,0) *{\bullet} ="C",
	(30,0) *\cir<2pt>{} ="D",
	\ar@{-} "A";"B"
	\ar@{=>} "B";"C"
	\ar@{-} "C";"D"
\end{xy} &
$\left\{ \begin{xy}
	*++!D{0} *\cir<2pt>{}        ="A",
	(10,0) *++!D{0} *\cir<2pt>{} ="B",
	(20,0) *++!D{0} *\cir<2pt>{} ="C",
	(30,0) *++!D{b} *\cir<2pt>{} ="D",
	\ar@{-} "A";"B"
	\ar@{=>} "B";"C"
	\ar@{-} "C";"D"
\end{xy} 
\right\}$\\
\bottomrule
	\end{longtable}
\end{center}

Therefore for each $\mathfrak{g}$,
we can find a basis of $\Psi(\mathfrak{b})$ 
by taking some weighted Dynkin diagrams 
in $\Psi(\mathcal{H}^n(\mathfrak{a}_+)) = 
\{ \Psi_A \in \Psi(\mathcal{H}^n(\mathfrak{j}_+)) \mid \text{$\Psi_A$ matches $S_\mathfrak{g}$}\}$ as follows:

\begin{center}
\begin{longtable}{ll} \toprule
$\mathfrak{g}$ & Example of basis of $\Psi(\mathfrak{b})$ \\ \midrule
$\mathfrak{f}_{4(4)}$ & $A_2,\tilde{A_2},B_3,F_4$ \\ \hline
$\mathfrak{f}_{4(-20)}$ & $\tilde{A_2}$ \\ 
\bottomrule
\end{longtable}
\end{center}

\subsection{Type $G_2$}

Let us consider the case where 
$\mathfrak{g}_\mathbb{C}$ is of type $G_2$,
that is, 
$\mathfrak{g}_\mathbb{C} \simeq \mathfrak{g}_{2,\mathbb{C}}$.
Then we have 
\[
\Map(\Pi,\mathbb{R})^\iota = \Map(\Pi,\mathbb{R})
= \left\{ \begin{xy}
	*++!D{a_1} *\cir<2pt>{}        ="A",
	(10,0) *++!D{a_2}  *\cir<2pt>{} ="B",
	\ar@3{->} "A";"B"
\end{xy} 
\right\}
\]

In \cite[Section 8.4]{Collingwood-McGovern93}, 
we can find some examples of weighted Dynkin diagrams in $\mathcal{H}^n(\mathfrak{j}_+)$ as follows:

\begin{center}
\begin{longtable}{ll} \toprule
	Symbol & Weighted Dynkin diagram in $\Psi(\mathcal{H}^n(\mathfrak{j}_+))$ \\ \midrule
	$G_2(a_1)$ & \begin{xy}
	*++!D{2} *\cir<2pt>{}        ="A",
	(10,0) *++!D{0}  *\cir<2pt>{} ="B",
	\ar@3{->} "A";"B"
\end{xy}\\ 
	$G_2$ & \begin{xy}
	*++!D{2} *\cir<2pt>{}        ="A",
	(10,0) *++!D{2}  *\cir<2pt>{} ="B",
	\ar@3{->} "A";"B"
\end{xy}\\ 
\bottomrule
\end{longtable}
\end{center}

Let $\mathfrak{g}$ be a non-compact real form of $\mathfrak{g}_\mathbb{C}$.
Then $\mathfrak{g}$ is a split real form of $\mathfrak{g}_\mathbb{C}$.
In particular, the Satake diagram $S_\mathfrak{g}$ and 
$\Psi(\mathfrak{b})$ of $\mathfrak{g}$ are the following:
\begin{center}
	\begin{longtable}{lll} \toprule
	$\mathfrak{g}$ & $S_\mathfrak{g}$ & $\Psi(\mathfrak{b})$ \\ \midrule
	
$\mathfrak{g}_{2(2)}$ & \begin{xy}
	*\cir<2pt>{}        ="A",
	(10,0) *\cir<2pt>{} ="B",
	\ar@3{->} "A";"B"
\end{xy} & 
$\left\{ \begin{xy}
	*++!D{b_1} *\cir<2pt>{}        ="A",
	(10,0) *++!D{b_2}  *\cir<2pt>{} ="B",
	\ar@3{->} "A";"B"
\end{xy} 
\right\}$
\\ 
\bottomrule
	\end{longtable}
\end{center}

Therefore, the weighted Dynkin diagrams $G_2(a_1)$ and $G_2$ 
in $\Psi(\mathcal{H}^n(\mathfrak{a}_+)) = 
\{ \Psi_A \in \Psi(\mathcal{H}^n(\mathfrak{j}_+)) \mid \text{$\Psi_A$ matches $S_\mathfrak{g}$}\}$  
give a basis of $\Psi(\mathfrak{b})$.

\appendix

\section{Semisimple symmetric spaces with proper $SL(2,\R)$-actions}\label{sec:app}

In \cite[Theorem 1.3]{Okuda13cls}, 
by using the main result (Theorem \ref{thm:intro}) of this paper, Kobayashi's properness criterion \cite[Theorem 4.1]{Kobayashi89} and Benoist's results in \cite{Benoist96}, 
we proved that the following three conditions on a semisimple symmetric space $G/H$ are equivalent:
\begin{enumerate}
\item $G/H$ admits a proper action of $SL(2,\R)$ via $G$.
\item $G/H$ admits discontinuous groups which are not virtually abelian.
\item \label{item:JDG:nilp} There exists a (complex) nilpotent orbit $\mathcal{O}^\C$ in $\mathfrak{g}_\C := \mathfrak{g} + \sqrt{-1} \mathfrak{g}$ such that $\mathcal{O}^\C$ meets $\mathfrak{g}$ but does not meet the $c$-dual $\mathfrak{h} + \sqrt{-1}\mathfrak{q}$ of the symmetric pair $(\mathfrak{g},\mathfrak{h})$, 
where $\mathfrak{g} = \mathfrak{h} + \mathfrak{q}$ is the decomposition of $\mathfrak{g}$ with respect to the involution associated to the symmetric pair $(\mathfrak{g},\mathfrak{h})$.
\end{enumerate}

In \cite[\text{Table }6]{Okuda13cls}, 
we gave a list of symmetric pair $(\mathfrak{g},\mathfrak{h})$ with simple $\mathfrak{g}$ satisfying the condition \eqref{item:JDG:nilp}.
Taking this opportunity, we would like to correct 
some errors of $(\mathfrak{g},\mathfrak{h})$ in \cite[\text{Table }6]{Okuda13cls}:
\begin{itemize}
\item ``$(\mathfrak{sl}(n,\R), \mathfrak{so}(n-i,i))$ for $2i < n$'' should be ``$(\mathfrak{sl}(n,\R), \mathfrak{so}(n-i,i))$ for $2i < n-1$''.
\item ``$(\mathfrak{su}(2m-1,2m-1),\mathfrak{so}^*(4m-2))$'' should be ``$(\mathfrak{su}(n,n),\mathfrak{so}^*(2n))$''.
\item ``$(\mathfrak{so}(k,k), \mathfrak{so}(2k,\C) + \mathfrak{so}(2))$'' should be ``$(\mathfrak{so}(k,k), \mathfrak{so}(k,\C) + \mathfrak{so}(2))$''.
\item ``$(\mathfrak{sl}(n,\C), \mathfrak{so}(n-i,i))$ for $2i < n$'' should be ``$(\mathfrak{sl}(n,\C), \mathfrak{so}(n-i,i))$ for $2i < n-1$''.
\end{itemize} 
For the reader's convenience we give a list of 
symmetric pair $(\mathfrak{g},\mathfrak{h})$ with simple $\mathfrak{g}$ satisfying the condition \eqref{item:JDG:nilp} as Table \ref{table:sl2-proper} below.

\begin{center}
\begin{longtable}{ll} \toprule
$\mathfrak{g}$ & $\mathfrak{h}$  \\ \midrule
$\mathfrak{sl}(2k,\mathbb{R})$ & $\mathfrak{sl}(k,\mathbb{C}) \oplus \mathfrak{so}(2)$ \\ \hline
$\mathfrak{sl}(n,\mathbb{R})$ & $\mathfrak{so}(n-i,i)$  \\ 
& $(2i < n-1)$ \\ \hline
$\mathfrak{su}^*(2k)$ & $\mathfrak{sp}(k-i,i)$ \\
& $(2i < k-1)$  \\ \hline
$\mathfrak{su}(2p,2q)$ & $\mathfrak{sp}(p,q)$ \\ \hline
$\mathfrak{su}(n,n)$ & $\mathfrak{so}^*(2n)$ \\ \hline 
$\mathfrak{su}(p,q)$ & $\mathfrak{su}(i,j) \oplus \mathfrak{su}(p-i,q-j) \oplus \mathfrak{so}(2)$ \\ 
&$(\min \{ p,q \} > \min \{ i,j \} + \min \{ p-i,q-j \} )$  \\ \hline
$\mathfrak{so}(p,q)$ & $\mathfrak{so}(i,j) \oplus \mathfrak{so}(p-i,q-j)$ \\ 
$(p+q \text{ is odd})$ & ($\min \{ p,q \} > \min \{ i,j \} + \min \{ p-i,q-j \}$) \\ \hline 
$\mathfrak{sp}(n,\mathbb{R})$ & $\mathfrak{su}(n-i,i) \oplus \mathfrak{so}(2)$ \\ \hline
$\mathfrak{sp}(2k,\mathbb{R})$ & $\mathfrak{sp}(k,\mathbb{C})$ \\ \hline 
$\mathfrak{sp}(p,q)$ & $\mathfrak{sp}(i,j) \oplus \mathfrak{sp}(p-i,q-j)$ \\ 
& $(\min \{ p,q \} > \min \{ i,j \} + \min \{ p-i,q-j \} )$  \\ \hline
$\mathfrak{so}(p,q)$ & $\mathfrak{so}(i,j) \oplus \mathfrak{so}(p-i,q-j)$ \\ 
$(p+q \text{ is even})$ & ($\min \{ p,q \} > \min \{ i,j \} + \min \{ p-i,q-j \}$, \\
 & unless $p=q=2m+1$ and $|i-j| =1$)  \\ \hline
$\mathfrak{so}(2p,2q)$ & $\mathfrak{su}(p,q) \oplus \mathfrak{so}(2)$ \\ \hline
$\mathfrak{so}^*(2k)$ & $\mathfrak{su}(k-i,i) \oplus \mathfrak{so}(2)$ \\ 
& $(2i < k-1)$  \\ \hline
$\mathfrak{so}(k,k)$ & $\mathfrak{so}(k,\mathbb{C}) \oplus \mathfrak{so}(2)$ \\ \hline
$\mathfrak{so}^*(4m)$ & $\mathfrak{so}^*(4m-4i+2) \oplus \mathfrak{so}^*(4i-2)$ \\ \hline
$\mathfrak{e}_{6(6)}$ & $\mathfrak{sp}(2,2)$ \\ \hline
$\mathfrak{e}_{6(6)}$ & $\mathfrak{su}^*(6) \oplus \mathfrak{su}(2)$ \\ \hline
$\mathfrak{e}_{6(2)}$ & $\mathfrak{so}^*(10) \oplus \mathfrak{so}(2)$ \\ \hline
$\mathfrak{e}_{6(2)}$ & $\mathfrak{su}(4,2) \oplus \mathfrak{su}(2)$ \\ \hline
$\mathfrak{e}_{6(2)}$ & $\mathfrak{sp}(3,1)$ \\ \hline
$\mathfrak{e}_{6(-14)}$ & $\mathfrak{f}_{4(-20)}$ \\ \hline
$\mathfrak{e}_{7(7)}$ & $\mathfrak{e}_{6(2)} \oplus \mathfrak{so}(2)$ \\ \hline
$\mathfrak{e}_{7(7)}$ & $\mathfrak{su}(4,4)$ \\ \hline
$\mathfrak{e}_{7(7)}$ & $\mathfrak{so}^*(12) \oplus \mathfrak{su}(2)$ \\ \hline
$\mathfrak{e}_{7(7)}$ & $\mathfrak{su}^*(8)$ \\ \hline
$\mathfrak{e}_{7(-5)}$ & $\mathfrak{e}_{6(-14)} \oplus \mathfrak{so}(2)$ \\ \hline
$\mathfrak{e}_{7(-5)}$ & $\mathfrak{su}(6,2)$ \\ \hline
$\mathfrak{e}_{7(-25)}$ & $\mathfrak{e}_{6(-14)} \oplus \mathfrak{so}(2)$ \\ \hline
$\mathfrak{e}_{7(-25)}$ & $\mathfrak{su}(6,2)$ \\ \hline
$\mathfrak{e}_{8(8)}$ & $\mathfrak{e}_{7(-5)} \oplus \mathfrak{su}(2)$ \\ \hline
$\mathfrak{e}_{8(8)}$ & $\mathfrak{so}^*(16)$ \\ \hline
$\mathfrak{f}_{4(4)}$ & $\mathfrak{sp}(2,1) \oplus \mathfrak{su}(2)$ \\ \hline 
$\mathfrak{sl}(2k,\mathbb{C})$ & $\mathfrak{su}^*(2k)$ \\ \hline 
$\mathfrak{sl}(n,\mathbb{C})$ & $\mathfrak{su}(n-i,i)$ \\ 
& ($2i < n-1$) \\ \hline 
$\mathfrak{so}(2k+1,\mathbb{C})$ & $\mathfrak{so}(2k+1-i,i)$ \\
& ($i < k$) \\ \hline
$\mathfrak{sp}(n,\mathbb{C})$ & $\mathfrak{sp}(n-i,i)$ \\ \hline
$\mathfrak{so}(2k,\mathbb{C})$ & $\mathfrak{so}(2k-i,i)$ \\
& ($i < k$ unless $k=i+1=2m+1$) \\ \hline
$\mathfrak{so}(4m,\mathbb{C})$ & $\mathfrak{so}(4m-2i+1,\mathbb{C}) \oplus \mathfrak{so}(2i-1,\mathbb{C})$ \\ \hline
$\mathfrak{so}(2k,\mathbb{C})$ & $\mathfrak{so}^*(2k)$ \\ \hline
$\mathfrak{e}_{6,\mathbb{C}}$ & $\mathfrak{e}_{6(-14)}$ \\ \hline
$\mathfrak{e}_{6,\mathbb{C}}$ & $\mathfrak{e}_{6(-26)}$ \\ \hline
$\mathfrak{e}_{7,\mathbb{C}}$ & $\mathfrak{e}_{7(-5)}$ \\ \hline
$\mathfrak{e}_{7,\mathbb{C}}$ & $\mathfrak{e}_{7(-25)}$ \\ \hline
$\mathfrak{e}_{8,\mathbb{C}}$ & $\mathfrak{e}_{8(-24)}$ \\ \hline
$\mathfrak{f}_{4,\mathbb{C}}$ & $\mathfrak{f}_{4(-20)}$ \\ 
\bottomrule
\caption{Classification of $(\mathfrak{g},\mathfrak{h})$ satisfying the condition ($\star$)}
\label{table:sl2-proper}
\end{longtable}
\end{center}

Here $k \geq 1$, $m \geq 1$, $n \geq 2$, $p, q \geq 1$ and $i, j \geq 0$.
It should be remarked that $\mathfrak{so}(p,q)$ is simple if and only if $p+q \geq 3$ and $(p,q) \neq (2,2)$, 
$\mathfrak{so}(2k,\mathbb{C})$ is simple if and only if $k \geq 3$.

\section*{Acknowledgements.}
The author would like to give warm thanks to Toshiyuki Kobayashi
and Hiroyuki Ochiai whose comments were of inestimable value for this paper.
The author also thanks to Yosuke Morita and Koichi Tojo for pointing out 
some errors in \cite[\text{Table }6]{Okuda13cls}.
This work is supported by JSPS KAKENHI Grant Number JP16K17594.

\bibliographystyle{amsplain}

\def\cprime{$'$}
\providecommand{\bysame}{\leavevmode\hbox to3em{\hrulefill}\thinspace}
\providecommand{\MR}{\relax\ifhmode\unskip\space\fi MR }
\providecommand{\MRhref}[2]{%
  \href{http://www.ams.org/mathscinet-getitem?mr=#1}{#2}
}
\providecommand{\href}[2]{#2}

\end{document}